
\documentclass[]{interact}

\usepackage{xcolor}
\usepackage{epstopdf}
\usepackage[caption=false]{subfig}

\usepackage[numbers,sort&compress]{natbib}
\bibpunct[, ]{[}{]}{,}{n}{,}{,}

\theoremstyle{plain}
\newtheorem{theorem}{Theorem}[section]
\newtheorem{lemma}[theorem]{Lemma}
\newtheorem{corollary}[theorem]{Corollary}
\newtheorem{proposition}[theorem]{Proposition}

\theoremstyle{definition}

\newtheorem{assumption}[theorem]{Assumption}

\theoremstyle{remark}

\def\be{\begin{eqnarray}}
\def\ee{\end{eqnarray}}

\def\b*{\begin{eqnarray*}}
\def\e*{\end{eqnarray*}}

\def \E{\mathbb{E}}
\def \F{\mathbb{F}}

\def \H{\mathbb{H}}

\def \L{\mathbb{L}}

\def \P{\mathbb{P}}

\def \R{\mathbb{R}}

\def\Ac{{\cal A}}

\def\Fc{{\cal F}}

\def\Mc{{\cal M}}

\def\Pc{{\cal P}}

\def\1{{\bf 1}}

\newcommand{\dd}{\mathop{}\mathopen{}\mathrm{d}}
\def\Dpin{\Delta^{\!\pi_n}\!}
\begin{document}


\title{It\=o and It\=o\,-Wentzell chain rule for flows of conditional laws 
\\ of continuous semimartingales: an easy approach\thanks{The authors declare that this research presents no conflict of interest.}}

\author{
\name{Assil Fadle\textsuperscript{a}\thanks{assil.fadle@polytechnique.edu.}, Mehdi Talbi \textsuperscript{b}\thanks{Université Paris-Cité, Paris, mtalbi@lpsm.paris} and Nizar Touzi\textsuperscript{c}\thanks{NYU, Tandon School of Engineering, USA. This author is partially supported by NSF grant \#DMS-2508581,  nizar.touzi@nyu.edu}}
\affil{\textsuperscript{a}Ecole Polytechnique, France; \\ \textsuperscript{b}Université Paris-Cité, France; \\ \textsuperscript{c}New York University, Tandon School of Engineering, USA.}
}

\maketitle

\begin{abstract}
We provide a general It\=o\,-Wentzell formula for a random field of maps on the Wasserstein space of probability measures, defined by continuous semimartingales, and evaluated along the flow of conditional distributions of another continuous semimartingale. Our method follows standard arguments of It\=o calculus, and thus bypasses the approximation by empirical measures commonly used in the existing literature. As an application, we derive the dynamic programming equation for a mean field stochastic control problem with common noise.
\end{abstract}

\begin{amscode}
60G40, 49N80, 35Q89, 60H30
\end{amscode}

\begin{keywords}
It\=o's formula on the Wasserstein space, and its It\=o\,-Wentzell extension, mean field optimal control.
\end{keywords}

\section{Introduction}

Let $X=(X_t)_{t\ge 0}$ be a square integrable continuous semimartingale on a filtered probability space $\big(\Omega,\Fc,\{\Fc_t\}_{t\ge 0},\P\big)$. For simplicity, we consider the scalar case as the multidimensional extension does not raise any special difficulties.  Denote by $m_t:=\P\circ X_t^{-1}$ the marginal law of $X_t$, which lies in the set $\Pc_2(\R)$ of all probability measures with finite second moment. For a function $u:\Pc_2(\R)\longrightarrow\R$, with appropriate regularity, an It\=o's chain rule for the map $t\longmapsto u(m_t)$ was established by various methods in the literature after the Lectures of P.L. Lions at the Coll\`ege de France, see Buckdahn, Li, Peng \& Rainer \cite{buckdahn2017} and Chassagneux, Crisan \& Delarue \cite{chassagneux2015} for continuous diffusions, Cavallazzi \cite{cavallazzi2022} for a Krylov-type extension of the It\=o formula to maps in appropriate Sobolev spaces, Li \cite{li2} and Burzoni, Ignazio, Reppen \& Soner \cite{burzoni2020} for special classes of jump-diffusions with continuous marginals. The case of general c\`adl\`ag semimartingales was solved simultaneously by Guo, Pham \& Wei \cite{guo2022} and Talbi, Touzi \& Zhang \cite{talbi2023}.

The It\=o chain rule states that, for a map $u$ with appropriate smoothness on the Wasserstein space of probability measures, we have
\begin{equation}\label{Ito0}
u(m_t)
=
u(m_0)
+\E\Big[\int_0^t \!\!\partial_\mu u(m_s,X_s)\dd X_s
               \!+\!\frac12\int_0^t\!\! \partial_x\partial_\mu u(m_s,X_s)\dd\langle X\rangle_s
     \Big],
~\mbox{for all}~
t\ge 0,
\end{equation}
where $\partial_\mu u:\Pc_2(\R)\times\R\longrightarrow\R$ denotes the so-called Lions derivative, and $\partial_x$ is the partial gradient operator with respect to the $x-$variable. See e.g. Carmona \& Delarue \cite{CD1}.

Our objective in this paper is to revisit extensions of the last It\=o's rule in two directions:

\noindent - the measure variable is random and defined as the conditional law $\mu_t:=\P\circ (X_t|\Fc^0)^{-1}$ of $X_t$ given some sub-sigma algebra $\Fc^0$ of $\Fc$,

\noindent - the function $u$ is extended to the context of a dynamic stochastic flow of continuous semimartingales $\{u_t(x),t\ge 0\}$ for all fixed $x\in\R$. 

The first extension is motivated by the vibrant research activity on mean field stochastic control with common noise, and the Master equation in the context of mean field games with common noise. The second extension is also motivated by similar stochastic control problems under partial information. The huge interest of the community in this area is enhanced by the wide applications in various questions pertaining to multiple agents decision problems.

Our main emphasis is on the simplicity of our derivations which follow standard arguments in It\=o calculus, and which allow to obtain new extensions which were not considered in the existing literature. In order to better explain our approach, let us show how \eqref{Ito0} can be obtained by means of the following early graduate class level arguments (where the two first steps are simple reminders):
\begin{itemize}
\item We first recall from Cardaliaguet, Delarue, Lasry \& Lions \cite{cardaliaguet2015} that the Lions derivative $\partial_\mu$ is related to the functional linear derivative $\delta_m$ by $\partial_\mu=\partial_x\delta_m$, where $\delta_mu:\Pc_2(\R)\times\R\longrightarrow\R$ is defined for all $m^0,m^1\in\Pc_2(\R)$, with barycenter $m^\lambda:=(1-\lambda)m^0+\lambda(m^1\!\!-\!m^0)$, by the following limit, if exists: 
$$
\lim_{\lambda\searrow 0}
\frac1\lambda\big[u\big(m^\lambda\big)-u(m^0)\big]
=:
\langle\delta_m u(m^0),m^1\!\!-\!m^0\rangle
=
\int\delta_m u(m^0,x)(m^1\!\!-\!m^0)(\dd x).
$$
Notice that this definition is a mix of directional and G\^ateaux derivative, and that the map $\delta_mu(m^0):=\delta_mu(m^0,.):\R \longrightarrow\R$ needs to have quadratic growth, at most, in order for the last integral to be well-defined.
\item By standard calculus, and under slight regularity, this definition is equivalent to the requirement of existence of such a function $\delta_mu$ satisfying the requirement
$$
u(m^1)-u(m^0)
=
\int_0^1 \big\langle \delta_m u\big(m^\lambda\big),m^1\!\!-\!m^0\big\rangle \dd\lambda,
~\mbox{for all}~
m^0,m^1\in\Pc_2(\R),
$$
which is all we need for our subsequent derivation of It\=o's formula.
\item Given a dense partition $(t^n_i)_{i\ge 0}$ of the $[0,\infty)$, denote $s^n_i:=s\wedge t^n_i$, for all $s\ge 0$, and use the telescopic decomposition together with the last definition to see that:
\be\label{telescopic0}
u(m_s)-u(m_0)
=
\sum_{i\ge 1} u(m_{s^n_i})-u(m_{s^n_{i-1}})
=
\sum_{i\ge 1} \int_0^1 
\langle U_{n,i}^\lambda,m_{s^n_i}-m_{s^n_{i-1}}
 \rangle\dd\lambda,
\ee 
with $U_{n,i}^\lambda:=\delta_m u\big((1-\lambda)m_{s^n_{i-1}}
                                  +\lambda m_{s^n_i}\big)$ a scalar map on $\R$.
We next observe that 
\b*
\langle U_{n,i}^\lambda,m_{s^n_i}\!-\!m_{s^n_{i-1}}\rangle
&=&
\int U_{n,i}^\lambda \dd(m_{s^n_i}\!-\!m_{s^n_{i-1}})
\\
&=&
\E\big[U_{n,i}^\lambda(X_{s^n_i})\!-\!U_{n,i}^\lambda(X_{s^n_{i-1}})\big]
\\
&=&
\E\Big[\int_{s_{i-1}^n}^{s_i^n}\!\!\!(U_{n,i}^\lambda)'(X_r)\dd X_r
\!+\!\frac12(U_{n,i}^\lambda)''(X_r)\dd\langle X\rangle_r
\Big],
\e* 
where the last equality follows from the standard It\=o's formula, under the appropriate regularity assumptions on the map $U_{n,i}^\lambda$. Plugging this expression in \eqref{telescopic0}, we obtain the required formula \eqref{Ito0} by standard limiting argument using the dominated convergence theorem.
\end{itemize}

The last argument is most appealing as it uses the standard intuitive notion of functional linear derivative $\delta_m$. Moreover, it completely bypasses the crucial step of projection on empirical measures used in most of the previous literature following the Lectures of P.L. Lions at the Coll\`ege de France, see Chassagneux, Crisan \& Delarue \cite{chassagneux2015}, Buckdahn, Li, Peng \& Rainer \cite{buckdahn2017}, Carmona \& Delarue \cite{CD1}. Here, the idea is to approximate the marginal law $m_t$ by the corresponding empirical measure $\overline{m}_t^N:=\frac1N\sum_{i\le N} \delta_{X^i_t}$ of a finite sample of $N$ independent copies $(X^1,\ldots,X^N)$, apply the standard It\=o's formula to the finite dimensional map $\overline{u}^N(X^1_t,\ldots,X^N_t):=u(\overline{m}_t^N)$, and finally take limits by using fine results on the convergence of empirical measures.

The simple method outlined above is applied in Talbi, Touzi \& Zhang \cite{talbi2023} in the context of c\`ad-l\`ag semimartingales, see also the parallel paper by Guo, Pham \& Wei \cite{guo2022} which uses a functional analytic extension of an appropriate class of cylindrical maps in order to account for the jumps of the semimartingale. 

The main contribution of this paper is to show that the previous simple method also applies to derive an It\=o\,-Wentzell chain rule for conditional laws. This answers in particular a question raised in dos Reis \& Platonov \cite{reis2022}, who derive the It\=o\,-Wentzell formula by adapting the technique of projection on empirical measures used by Carmona \& Delarue \cite{CD2} to derive the It\=o formula for conditional laws, see also Cardaliaguet, Delarue, Lasry \& Lions \cite{cardaliaguet2015} in the context of the Master equation. We notice that, while the state process in \cite{reis2022} is defined by SDEs driven by Brownian motions and is conditioned by a Brownian motion, we consider in this paper general continuous semimartingales with general conditioning. Moreover, our It\=o\,-Wentzell formula is derived for a random flow continuous semimartingale, extending the case of deterministic function of a process and a conditional law of another process of \cite{reis2022}.

The paper is organized as follows. Section \ref{sec:Ito} provides an It\=o's formula for conditional marginal laws of continuous semimartingales. Although this result is a particular case of the subsequent one, we believe that it deserves to be isolated for the sake of clarity. Section \ref{sec:Ito-Wentzell} contains our general It\=o\,-Wentzell formula in the context where the random field of maps is also defined by continuous semimartingales. Finally, Section \ref{sec:control} provides an application in mean field stochastic control with common noise.

\section{Notations}
We denote $x\cdot y:=\sum_i x_iy_i$ the Euclidean scalar product of two vectors in any finite dimensional space, $A\!:\!B:=\mbox{Tr}[A^\intercal B]$ and $A^{\otimes 2} = A A^\intercal$ for all matrices of appropriate size.

Throughout this paper, we fix a constant maturity $T>0$, and a complete probability space $(\Omega,\Fc,\P)$ equipped with a filtration $\F=\{\Fc_t,0\le t\le T\}$. 

Let $\pi : 0 = t_0 < \ldots < t_{n_\pi}=T$ be a subdivision of $[0,T]$ with mesh size $|\pi|:=\max_{i\le n_\pi} (t_i-t_{i-1})$. A sequence of subdivisions $(\pi^n)_{n \geq 0}$ is dense if the sequence of meshes $| \pi^n |$ converges to $0$ as $n \rightarrow \infty$. 

A stochastic process is said to be piecewise constant along the subdivision $\pi$ if it is constant on each interval $(t_{i-1},t_i]$. For a process $Y$ valued in $\R^d$, we denote the increment to the subdivision by $\Delta^{\pi} Y_s = Y_s - Y_{t_{i_s-1}}$ if $s\in(t_{i_s-1},t_{i_s}]$, and if the process $Y$ is in addition piecewise constant along $\pi$, we have $\Delta^{\pi} Y_s=Y_{t_{i_s}} - Y_{t_{i_s-1}}$.

The total variation of $Y$ is denoted by 
	\b*
	|Y|_{\rm TV} 
	&=& 
	\sup_\pi \sum_{i=1}^{n_\pi-1} |Y_{t_{i+1}} - Y_{t_i}| 
	\;=\; 
	\sup_\pi \sum_{i=1}^{n_\pi-1} |\Delta^\pi Y_{t_{i+1}}|,
	\e*
where $|.|$ is the Euclidean norm in $\R^d$.

The quadratic variation of $Y$ is defined as
\begin{align*}
	\langle Y \rangle_s = \lim_{|\pi| \rightarrow 0} \sum_{i=1}^{n_\pi - 1} (Y_{t_{i+1}} - Y_{t_i})(Y_{t_{i+1}} - Y_{t_i})^\intercal, \quad
	\mbox{for all} \quad s\ge 0,
\end{align*}
where the limit is in probability and does not depend on the choice of the subdivisions sequence.

$X$ is said to be a (continuous) semimartingale if it can be written as $X_s = X_0 + A_s + M_s, s \in [0, T]$ where $A$ is a (continuous) finite-variation process and $M$ is a (continuous) martingale.

 $\H^2(Y)$ is the collection of all progressively measurable processes $H$, with same dimension as $Y$, such that $\E\big[\int_0^T H_sH_s^\intercal\!:\! d\langle Y\rangle_s\big]<\infty$.

 A sequence $(H^n)_{n\ge 0}$ of predictable bounded processes is called a simple approximation of a process $H\in\H^2(Y)$ if there exists a dense sequence of subdivisions $(\pi^n)_{n \geq 0}$ such that $H^n$ is piecewise constant along $\pi^n$, for all $n\ge 0$, and $H^n\longrightarrow H$ in $\mathbb{H}^2(Y)$, as $n\to\infty$.

The following (probably well-known) result will be used frequently. As we failed to find a reference for it, we report its proof as a complement in the Appendix section \ref{sec:App}.

	\begin{lemma} \label{lemma}
Let $X$ be a semimartingale with decomposition $X=A+M$ into a finite variation process $A$ and a martingale $M$ satisfying $\mathbb{E}\left[ |A|_{\rm{TV}}^2 + \langle M \rangle_T^2 \right] < \infty$. Let $(H^n)_{n\ge 0}$ be a simple approximation of a matrix-valued bounded progressively measurable $H$ with rows in $\mathbb{H}^2(X)$, along some sequence of subdivisions $(\pi^n : 0 = t_0^n < \ldots < t_{p_n}^n = T)$. Then:
\b*
\sum_{i=1}^{p_n}  H^n_{t_{i-1}^n} \! : \! (\Delta^{\pi^n} 
                             X_{t_i^n}) (\Delta^{\pi^n} X_{t_i^n})^\intercal 
\longrightarrow 
\int_0^T H_s  \! : \! \dd \langle X \rangle_s,
&\mbox{in}~~\L^1&
\mbox{as}~~n\to\infty.
\e*
\end{lemma}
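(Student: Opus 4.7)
I would decompose $X=A+M$ and expand $(\Delta^{\pi^n}X_{t^n_i})^{\otimes 2}$ into four pieces. The three sums containing $\Delta^{\pi^n}A$ should vanish in $L^1$ by continuity of $A$ and $M$, while the pure-$M$ sum should converge to $\int_0^T H_s:d\langle M\rangle_s$, which coincides with $\int_0^T H_s:d\langle X\rangle_s$ because continuous finite-variation processes carry no quadratic variation.

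For the three $A$-involving sums, Cauchy-Schwarz on the cross terms and a trivial estimate on the pure-$A$ term give the bound $C\|H^n\|_\infty|A|_{\rm TV}\bigl(\max_i|\Delta^{\pi^n}A_{t^n_i}|+\max_i|\Delta^{\pi^n}M_{t^n_i}|\bigr)$. Continuity of $A$ and $M$ forces both maxima to $0$ almost surely as $|\pi^n|\to 0$; the product is dominated by $C(|A|_{\rm TV}^2+(\sup_t|M_t|)^2)$, which is integrable by the moment assumption combined with Doob's inequality, so dominated convergence concludes.

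For the $M$-term, I would write, with $\Delta M_i:=M_{t^n_i}-M_{t^n_{i-1}}$ and $\Delta\langle M\rangle_i:=\langle M\rangle_{t^n_i}-\langle M\rangle_{t^n_{i-1}}$,
$\sum_i H^n_{t^n_{i-1}}:(\Delta M_i)^{\otimes 2}=\int_0^T H^n_s:d\langle M\rangle_s+R_n$,
where $R_n:=\sum_i H^n_{t^n_{i-1}}:\bigl[(\Delta M_i)^{\otimes 2}-\Delta\langle M\rangle_i\bigr]$, using that $H^n$ is piecewise constant along $\pi^n$. The first piece converges in $L^1$ to $\int_0^T H_s:d\langle M\rangle_s$ by Cauchy-Schwarz combined with the $\H^2(X)$-convergence $H^n\to H$.

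The hard part is $R_n\to 0$. Since $MM^\intercal-\langle M\rangle$ is a martingale, each summand of $R_n$ has zero conditional expectation given $\Fc_{t^n_{i-1}}$, yielding $L^2$-orthogonality of the summands. Combined with the conditional Burkholder-Davis-Gundy estimate $\E\bigl[|\Delta M_i|^4\mid\Fc_{t^n_{i-1}}\bigr]\le C\,\E\bigl[|\Delta\langle M\rangle_i|^2\mid\Fc_{t^n_{i-1}}\bigr]$, this yields $\E[R_n^2]\le C\|H^n\|_\infty^2\,\E\bigl[\max_i\Delta\langle M\rangle_i\cdot\langle M\rangle_T\bigr]$, which tends to $0$ by continuity of $\langle M\rangle$ and dominated convergence using $\E[\langle M\rangle_T^2]<\infty$. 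The one technical nuisance is uniform boundedness of $\|H^n\|_\infty$, which the abstract definition of simple approximation does not grant directly but can be arranged by truncating each $H^n$ at level $\|H\|_\infty$ without damaging the $\H^2$-convergence.
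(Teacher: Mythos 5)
Your proof is correct and follows essentially the same route as the paper: decompose $X=A+M$, show the $A$-involving cross and square terms vanish by domination, and for the pure martingale term split off $\int H^n\!:\!\dd\langle M\rangle_s$ (piecewise constancy) and kill the remainder via $L^2$-orthogonality of the martingale increments of $MM^\intercal-\langle M\rangle$ together with the BDG inequality at order four. Your closing remark about uniform boundedness of $\|H^n\|_\infty$ is a genuine refinement: the paper's Appendix silently introduces ``$\|H\|_\infty$ a uniform, deterministic bound on $|H^n|,\,n\ge 1$,'' which the definition of simple approximation does not supply, and your componentwise truncation at level $\|H\|_\infty$ (which is $1$-Lipschitz, hence preserves the $\H^2$-convergence and piecewise constancy) is the correct repair.
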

	
We notice that, if $H$ is continuous, then the last convergence result holds true with $H^n_{t_i^n} = H_{t_i^n}$, for $i=0,\ldots,n_{p_n}$.
	
The marginal laws considered in this paper lie in the set $\Pc_2(\R^d)$ of all probability measures on $\R^d$ with finite second moment. Similarly, our conditional marginal laws are random maps taking values in $\Pc_2(\R^d)$. This set is naturally equipped with the Wasserstein distance
\b*
\mathcal{W}_2(m,m')
:=
\inf_{\pi\in\Pi(m,m')} \int |x-x|^2\pi(\dd x,\dd x'),
&\mbox{for all}&
m,m'\in\Pc(\R^d),
\e*
where $\Pi(m,m')$ is the set of all couplings of $(m,m')$, i.e. probability measures on $\R^d\times\R^d$ with marginals $m$ and $m'$.

We say that a function $u : \mathcal{P}_2(\mathbb{R}^d) \rightarrow \mathbb{R}$ admits a (first order) functional linear derivative if there exists a map $\delta_m u : \mathcal{P}_2(\mathbb{R}^d) \times \mathbb{R}^d \rightarrow \mathbb{R}$ such that for all $m^0, m^1 \in \mathcal{P}_2(\mathbb{R}^d)$,
	\begin{equation*}
		u(m^1) - u(m^0) = \int_0^1 \int_{\mathbb{R}^d} \delta_m u(m^\lambda, x) (m^1 - m^0)(\dd x) \dd \lambda \text{ with } m^\lambda := (1-\lambda)m^0+\lambda m^1,
	\end{equation*}
	and $\delta_m u$ has quadratic growth in $x$, locally uniformly in $m$, so that the last integral is well-defined. Similarly, the second order functional linear derivative $\delta_m^2 : \mathcal{P}_2(\mathbb{R}^d) \times \mathbb{R}^d \times \mathbb{R}^d \rightarrow \mathbb{R}$ is such that for all $m, m' \in \mathcal{P}_2(\mathbb{R}^d)$, and $x \in \mathbb{R}^d$:
	\begin{equation*}
		\delta_m u(m^1,x) - \delta_m u(m^0, x) 
		= 
		\int_0^1 \int_{\mathbb{R}^d} \delta_m^2 u(m^\lambda, x, \hat{x}) (m^1 - m^0)(\dd \hat{x}) \dd \lambda,
	\end{equation*}
	and $\delta_m^2 u$ has quadratic growth in $\hat{x}$, locally uniformly in $m$, for all fixed $x\in\R^d$. Notice that under these conditions, $\delta_m \partial_x \delta_m u = \partial_x \delta_m^2 u$.

\section{It\=o's formula}
\label{sec:Ito}
	
Throughout this paper, we consider an $\R^d-$valued continuous semimartingale with canonical decomposition 
	\begin{equation}\label{def:X}
		X =X_0 + M + A,
	\end{equation}
where $M$ is a martingale and $A$ is a finite-variation process, both started from $0$, and $X_0\in\L^2(\Fc_0)$. We assume that:
\begin{equation}\label{eq:integrability}
\mathbb{E}\Big[ \sup_{t \in [0,T]} | X_t |^2 \Big] < \infty.
\end{equation}

For an arbitrary sub-filtration $\mathbb{F}^0 = (\mathcal{F}^0_t)_{t \ge 0}$ of $\mathbb{F}$, we denote by $\mu_t = \mathcal{L}(X_t | \mathcal{F}^0_T)$, the law of $X_t$ conditional on $\mathcal{F}^0_T$, for $t \in [0, T]$.  Note that, for all $s, t \in [0,T]$,
$$ \mathcal{W}_2(\mu_s, \mu_t)^2 \le \mathbb{E}\Big[ \big| X_s  - X_t \big|^2 \big| \mathcal{F}_T^0 \Big] \underset{s \to t}{\longrightarrow} 0, \ \mbox{a.s.,}$$
by the integrability condition \eqref{eq:integrability}. Therefore, we may consider an almost surely continuous version of the flow $t \mapsto \mu_t$.

Our first result is the following It\=o's formula for flows of conditional law of the continuous semimartingale $X$.

\begin{assumption}\label{assum:Ito}
The map $u\!: \mathcal{P}_2(\mathbb{R}^d) \longrightarrow \mathbb{R}$, and the continuous semimartingale $X$ satisfy:

 {\rm(I1)} $\delta_m u, \partial_x^2 \delta_m u, \delta_m^2 u, \partial_x \partial_{\hat{x}} \delta_m^2 u$ exist and are continuous in each variable;

 {\rm(I2)} $\partial_x^2 \delta_m u, \partial_x \partial_{\hat{x}} \delta_m^2 u$ are bounded;

 {\rm(I3)} $X_0$, $|A|_{\rm{TV}}$ and $\langle M \rangle_T$ are square integrable.
\end{assumption}

\begin{theorem}\label{thm:Ito}
Let Assumption \ref{assum:Ito} hold true, and let $\hat{X}$ be a copy of $X$ on a copy probability space, with $\mathcal{L}(\hat X | \mathcal{F}^0_T) = \mathcal{L}(X | \mathcal{F}^0_T)$. Then:
\b*
u(\mu_T) - u(\mu_0) 
&=& 
\mathbb{E}^0 \left[ \int_0^T  \partial_x \delta_m u(\mu_s, X_s) \!\cdot\!\dd X_s 
                              + \frac12\;\partial^2_x \delta_m u(\mu_s, X_s) \!:\!\dd \langle X \rangle_s \right]  \\
&&
+ \; \mathbb{E}^0 \hat{\mathbb{E}}^0 \left[  \int_0^T\frac12\; \partial^2_{x \hat{x}} \delta_m^2 u (\mu_s, X_s, \hat{X}_s) \!:\!\dd \langle X, \hat{X} \rangle_s \right], 
~~\mbox{a.s.}
\e*
where  $\mathbb{E}^0 := \mathbb{E} [ \cdot | \mathcal{F}^0_T]$ and $\hat{\mathbb{E}}^0:= \mathbb{E} [ \cdot | X,\mathcal{F}^0_T]$ denote the conditional expectations in the enlarged space.
\end{theorem}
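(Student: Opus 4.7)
The strategy follows the telescoping argument outlined in the introduction, with an additional layer that accounts for the fluctuation of the random measure $\mu_s$: the linearization error is no longer negligible and will produce the cross term involving $\delta_m^2 u$. Fix a dense sequence of partitions $\pi^n : 0 = t^n_0 < \ldots < t^n_{p_n} = T$, set $\bar\mu^{n,\lambda}_i := (1-\lambda)\mu_{t^n_{i-1}} + \lambda \mu_{t^n_i}$, and apply the telescoping together with the functional linear derivative,
\[
u(\mu_T) - u(\mu_0) = \sum_{i=1}^{p_n} \int_0^1 \int_{\mathbb{R}^d} \delta_m u(\bar\mu^{n,\lambda}_i, x)(\mu_{t^n_i} - \mu_{t^n_{i-1}})(dx)\, d\lambda.
\]
A second application of the functional linear derivative then splits $\delta_m u(\bar\mu^{n,\lambda}_i, x) = \delta_m u(\mu_{t^n_{i-1}}, x) + \lambda \int_0^1 \int \delta_m^2 u(\tilde\mu^{n,\lambda,\kappa}_i, x, \hat x)(\mu_{t^n_i} - \mu_{t^n_{i-1}})(d\hat x)\, d\kappa$, with $\tilde\mu^{n,\lambda,\kappa}_i := (1-\kappa\lambda)\mu_{t^n_{i-1}} + \kappa\lambda\, \mu_{t^n_i}$.

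\textbf{Frozen-measure piece.} For the $\delta_m u(\mu_{t^n_{i-1}}, \cdot)$ part, the basic identity $\int f(x)(\mu_{t^n_i} - \mu_{t^n_{i-1}})(dx) = \mathbb{E}^0[f(X_{t^n_i}) - f(X_{t^n_{i-1}})]$, valid for $\mathcal{F}^0_T$-measurable $f$, reduces the increment to a one-variable It\=o formula applied to $\delta_m u(\mu_{t^n_{i-1}}, \cdot)$ evaluated along the semimartingale $X$ on $[t^n_{i-1}, t^n_i]$: the measure argument is $\mathcal{F}^0_T$-measurable and inert in time on that interval, so the standard chain rule applies. Summing over $i$ and using the continuity (I1) together with Lemma \ref{lemma} to pass the Riemann sums to the limit in $L^1$, one recovers, as $|\pi^n|\to 0$, the first line of the claim, namely $\mathbb{E}^0\big[\int_0^T \partial_x\delta_m u(\mu_s, X_s) \cdot dX_s + \tfrac{1}{2} \int_0^T\partial_x^2 \delta_m u(\mu_s, X_s) : d\langle X\rangle_s\big]$.

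\textbf{Cross term via the copy $\hat X$.} For the $\delta_m^2 u$ remainder, express the double pairing through the copy as $\int\int g(x,\hat x)(\mu_{t^n_i} - \mu_{t^n_{i-1}})^{\otimes 2}(dx\, d\hat x) = \mathbb{E}^0\hat{\mathbb{E}}^0[\nabla_i g]$, with the mixed second-difference $\nabla_i g := g(X_{t^n_i},\hat X_{t^n_i}) - g(X_{t^n_i},\hat X_{t^n_{i-1}}) - g(X_{t^n_{i-1}},\hat X_{t^n_i}) + g(X_{t^n_{i-1}},\hat X_{t^n_{i-1}})$. Applying It\=o successively in $\hat x$ (with $X$ frozen) and in $x$ (with $\hat X$ frozen) to $g = \delta_m^2 u(\tilde\mu^{n,\lambda,\kappa}_i, \cdot,\cdot)$, $\nabla_i g$ equals the double stochastic integral $\int_{[t^n_{i-1},t^n_i]^2} \partial^2_{x\hat x} g(X_s,\hat X_r)\,dX_s\,d\hat X_r$ plus three lower-order terms of the forms $\int\int d\langle X\rangle_s\,d\hat X_r$, $\int\int dX_s\,d\langle\hat X\rangle_r$ and $\int\int d\langle X\rangle_s\,d\langle\hat X\rangle_r$. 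Summing over $i$, integrating out $\lambda,\kappa$ (which yields the prefactor $\int_0^1 \lambda\,d\lambda\cdot\int_0^1 d\kappa = \tfrac{1}{2}$), and invoking a bivariate version of Lemma \ref{lemma} on $\pi^n\times\pi^n$ identifies the limit of the main double sum with $\mathbb{E}^0\hat{\mathbb{E}}^0\big[\int_0^T \partial^2_{x\hat x}\delta_m^2 u(\mu_s, X_s,\hat X_s) : d\langle X,\hat X\rangle_s\big]$, reproducing the second line of the theorem.

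\textbf{Main obstacle.} The delicate step is the convergence of the bivariate Riemann-type sum $\sum_i \mathbb{E}^0\hat{\mathbb{E}}^0\big[\int_{[t^n_{i-1},t^n_i]^2} \partial^2_{x\hat x} g\,dX_s\,d\hat X_r\big]$ to the cross-variation integral, together with the parallel verification that the three lower-order pieces vanish in $L^1$ (their typical sizes, after summation, being $O(|\pi^n|^{1/2})$ or smaller). This is the only point where a genuinely new estimate beyond the one-variable Lemma \ref{lemma} is required; the boundedness (I2) of $\partial^2_x\delta_m u$ and $\partial^2_{x\hat x}\delta_m^2 u$ together with the square integrability (I3) of $X_0$, $|A|_{\rm TV}$ and $\langle M\rangle_T$ are precisely what allow the $L^1$ domination. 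Everything else — the first-order part, dominated convergence, and continuity of the derivatives — is routine.
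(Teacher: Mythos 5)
Your proposal diverges from the paper's actual proof in a way that creates a genuine gap, concentrated in the cross-term step.

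For the cross term you expand the mixed second difference $\nabla_i g$ by \emph{applying It\=o successively in $\hat x$ and in $x$}. Carrying this out literally produces, besides the double stochastic integral with integrand $\partial^2_{x\hat x}\delta_m^2 u$, the lower-order pieces you name: these carry integrands $\partial^2_{\hat x}\delta_m^2 u$, $\partial^2_x\partial_{\hat x}\delta_m^2 u$, $\partial_x\partial^2_{\hat x}\delta_m^2 u$, and $\partial^2_x\partial^2_{\hat x}\delta_m^2 u$. None of these exist under Assumption \ref{assum:Ito}: (I1) only provides $\partial^2_x\delta_m u$ and the mixed $\partial_x\partial_{\hat x}\delta_m^2 u$. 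You cannot even write down the It\=o expansion in $\hat x$ without $\partial^2_{\hat x}\delta_m^2 u$. So the double-It\=o route silently assumes two extra orders of smoothness that the theorem does not grant. (Showing the lower-order terms vanish is also not free, but that is secondary to the fact that they are not even well defined.) A related omission is the ``bivariate version of Lemma \ref{lemma} on $\pi^n\times\pi^n$'': this is not in the paper, and you would need to state and prove it, whereas the paper's proof only ever invokes the one-parameter Lemma \ref{lemma} and a polarized consequence of it.

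The paper sidesteps the missing derivatives by never applying It\=o on a subinterval at all. Instead of the chain rule, the proof works directly with the discrete increments: a second-order \emph{Taylor} expansion in $x$ (intermediate point $\xi$) of $\delta_m u(\mu^\lambda,\cdot)$ gives the first-order term in $\Dpin X$ and a quadratic term involving $\partial^2_x\delta_m u$, and then, to replace the $\lambda$-interpolated measure in the first-order coefficient by $\mu_{t^n_{i-1}}$, a \emph{first-order} Taylor in $\hat x$ (intermediate point $\hat\xi$) of $\delta_m\partial_x\delta_m u$ yields a term proportional to $\Dpin X\,(\Dpin\hat X)^\intercal$ with coefficient $\partial_x\partial_{\hat x}\delta_m^2 u$ — exactly the derivative that is available. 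The discrete quadratic/cross variation sums are then sent to their limits via Lemma \ref{lemma}. This is strictly economical: one order of Taylor less in $\hat x$ than you use means no third- or fourth-order derivatives are ever touched. If you want to keep an It\=o-style intuition, you can still apply It\=o with frozen measure to the frozen-measure piece as you do (that only requires $\partial_x^2\delta_m u$, which is available), but the cross term has to be handled by a mean-value Taylor argument, not by a second It\=o expansion in $\hat x$. As written, your cross-term step does not go through under the stated hypotheses.
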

	
\begin{proof} We organize our arguments in three steps.

\noindent {\it Step 1.} Let $\pi^n : 0 = t_0^n < t_1^n < \ldots < t_{p_n}^n = T$ be a dense sequence of partitions of $[0,T]$, and denote $\mu_{t_i^n}^\lambda := \lambda \mu_{t_i^n} + (1-\lambda) \mu_{t_{i-1}^n}$. By the definition of the linear functional derivative, we have:
\be
\delta_i^nu
:=
u(\mu_{t_i^n}) - u(\mu_{t_{i-1}^n}) 
&\!\!\!\!=&\!\!\!\! 
\int_0^1 \int_{\mathbb{R}^d} 
\delta_m u(\mu_{t_i^n}^\lambda, x) (\mu_{t_i^n} - \mu_{t_{i-1}^n})(\dd x) 
\dd \lambda 
\nonumber\\
&\!\!\!\!=&\!\!\!\! 
\int_0^1 \mathbb{E}^0 \left[ \delta_m u(\mu_{t_i^n}^\lambda, X_{t_i^n}) 
                                            - \delta_m u(\mu_{t_i^n}^\lambda, X_{t_{i-1}^n}) 
                                    \right] \dd \lambda.~~~~~~
\label{decompo0}
\ee

By the second order Taylor theorem, we may rewrite this as
$$
\delta_i^n u
\!=\!\! \int_0^1\!\!\! \mathbb{E}^0 \!\left[ \partial_x \delta_m u (\mu_{t_{i}^n}^\lambda, X_{t_{i-1}^n}) \! \cdot \!\Dpin X_{t_i^n} 
\!+\! \dfrac{1}{2} \partial^2_x \delta_m u  (\mu_{t_{i}^n}^\lambda, \xi_{t_{i-1}^n}) \! : \!(\Dpin X_{t_i^n}) (\Dpin X_{t_i^n})^\intercal \right] \dd \lambda,
$$
for some r.v. $\xi_{t_{i-1}^n}$ lying between $X_{t_{i-1}^n}$ and $X_{t_i^n}$. Let us introduce an independent copy $\hat{X}$ of $X$ conditionally to $\mathbb{F}^0$. Using the notation $
\left\lbrace F(\theta, \cdot) \right\rbrace_{\theta'_1}^{\theta'_2} := F(\theta, \theta'_2) - F(\theta, \theta'_1), 
$
for all map $F(\theta,\theta')$, and denoting $\gamma :=\partial_x \delta_m u$, we compute that
\b*
\Big\{\gamma(., X_{t_{i-1}^n})
\Big\}_{_{\mu_{t_{i-1}^n}}}^{^{\mu_{t_{i-1}^n}^\lambda}}
&=&
\int_0^1\!\!\!\int\!\! \delta_m \gamma (\mu_{t_{i}^n}^{\lambda \lambda'}, X_{t_{i-1}^n}, \hat{x}) (\mu_{t_{i}^n}^\lambda \!-\! \mu_{t_{i-1}^n})(\dd \hat{x}) \dd \lambda' 
\\
&=& 
\lambda \int_0^1 \hat{\mathbb{E}}^0 \left[ \left\lbrace \delta_m \gamma(\mu_{t_{i}^n}^{\lambda \lambda'}, X_{t_{i-1}^n}, \cdot) \right\rbrace_{\hat{X}_{t_{i-1}^n}}^{\hat{X}_{t_i^n}}   \right] \dd \lambda' \\
&=&
\lambda \int_0^1 \hat{\mathbb{E}}^0 \left[ \partial_{\hat{x}} \delta_m \gamma(\mu_{t_{i}^n}^{\lambda \lambda'}, X_{t_{i-1}^n}, \hat{\xi}_{t_{i-1}^n}) \Dpin \hat{X}_{t_i^n} \right] \dd \lambda',
\e*
for some $\hat{\xi}_{t_{i-1}^n}$ between $\hat{X}_{t_{i-1}^n}$ and $\hat{X}_{t_i^n}$.  This provides:
\b*
\delta_i^n u 
&=& 
\mathbb{E}^0 \left[ \partial_x \delta_m u(\mu_{t_{i-1}^n}, X_{t_{i-1}^n}) \! \cdot \! \Dpin X_{t_i^n} \right]\\
\\
&&
+\; \mathbb{E}^0 \hat{\mathbb{E}}^0 \left[ \int_0^1\!\!\! \int_0^1 \lambda \partial_{\hat{x}} \delta_m \partial_x \delta_m u (\mu_{t_{i}^n}^{\lambda \lambda'}, X_{t_{i-1}^n}, \hat{\xi}_{t_{i-1}^n}) \! : \! (\Dpin X_{t_i^n}) (\Dpin \hat{X}_{t_i^n})^\intercal \dd \lambda' \dd \lambda  \right] 
\\
&&+ \int_0^1 \mathbb{E}^0 \left[ \dfrac{1}{2} \partial^2_x \delta_m u  (\mu_{t_{i}^n}^\lambda, \xi_{t_{i-1}^n}) \! : \! (\Dpin X_{t_i^n})(\Dpin X_{t_i^n})^\intercal  \right] \dd \lambda.
\e*
Summing \eqref{decompo0}, and denoting by $t^n(s)$ the closest subdivision point strictly to the left of $s$, this provides:
\b*
u(\mu_T) - u(\mu_0)
&=& 
\int_{0}^{T} \mathbb{E}^0 \left[ \partial_x \delta_m u(\mu_{t^n(s)}, X_{t^n(s)}) \! \cdot \! \dd X_s \right] \\
&& \hspace{-27mm}
+\sum_{i=1}^{p_n} \int_0^1 \mathbb{E}^0\!\! \left[ \dfrac{1}{2} \partial^2_x \delta_m u(\mu_{t_{i}^n}^\lambda, \xi_{t_{i-1}^n}) \! : \! (\Dpin X_{t_i^n} )(\Dpin X_{t_i^n} )^\intercal  \right] \dd \lambda \\
&& \hspace{-27mm}
+ \sum_{i=1}^{p_n} \mathbb{E}^0 \hat{\mathbb{E}}^0\!\!
\left[ \int_0^1\!\!\!\int_0^1 \lambda \partial_{\hat{x}} \delta_m \partial_x \delta_m u (\mu_{t_{i}^n}^{\lambda \lambda'}, X_{t_{i-1}^n}, \hat{\xi}_{t_{i-1}^n}) \! : \! (\Dpin X_{t_i^n} ) (\Dpin \hat{X}_{t_i^n} )^\intercal \dd \lambda' \dd \lambda  \right].
\e*
Our goal in the subsequent steps is to analyse the convergence of each term in the last decomposition, along a suitable sequence of subdivisions. 

\noindent {\it Step 2.} In this step, we start with the first term that we denote $U^n_1$. Namely,
\b*
U^n_1
&:=&
\sum_{i=1}^{p_n}  \mathbb{E}^0\left[ \int_{t_{i-1}^n}^{t_i^n} 
\partial_x \delta_m u(\mu_{t_{i-1}^n}\!, \!X_s)\! \cdot \!\dd X_s \right] 
\\
&=&  
\mathbb{E}^0 \left[ \int_0^T f_n(s) \! \cdot \! \dd X_s \right] 
~~\mbox{with}~~ 
f_n(s) := \partial_x \delta_m u(\mu_{t^n(s)}, \!X_s).
\e*
Since $\mu_s$ is continuous in $s$, $f_n(s) \longrightarrow f(s) = \partial_x \delta_m u(\mu_s, X_s)$ almost surely when $n \rightarrow + \infty$. Moreover, $|\partial_x \delta_m u(\mu_s, X_s)| \leq C (1 + |X_s|)$, by our assumption on $\partial_x^2 \delta_m u$, and as $\mathbb{E}[\langle M \rangle_T + |A|_{\text{TV}}^2] < \infty$, we also have $\mathbb{E} \left[ \sup_{0 \leq t \leq T} |X_t|^2 \right] < \infty$. Then, it follows from the BDG inequality that
		\begin{equation*}
			\mathbb{E} \left[ \Big| \int_0^T (f_n(s) - f(s)) \! \cdot \! \dd X_s \Big| \right] \leq C_{\text{BDG}} \mathbb{E} \left[ \Big| \int_0^T (f_n(s) - f(s))^{\otimes 2} \! : \! \dd \langle X \rangle_s\Big|^{\frac12} \right] \longrightarrow 0 \text{ in } \mathbb{L}^2,
		\end{equation*}
 by dominated convergence, as
\b*
\Big|\! \int_0^T \!\!\! (f_n(s) - f(s))^{\otimes 2} \! : \! \dd \langle X \rangle_s \Big|^{\frac12} 
&\le& 
C \Big(\!\sup_s (1 + |X_s|)^2 |\langle X \rangle_T|\!\Big)^{\frac12} 
\\
&\leq& 
\frac{C}{2} \Big(\!\sup_s (1 + |X_s|)^2 + | \langle X \rangle_T| \!\Big) 
\!\in\! \mathbb{L}^1\!.
\e*
This shows that $\int_0^T \partial_x \delta_m u(\mu_{t^n(s)}, X_s) \! \cdot \! \dd X_s \longrightarrow \int_0^T \partial_x \delta_m u(\mu_{s}, X_s) \! \cdot \! \dd X_s$ in $\mathbb{L}^1$, thus implying by the Jensen inequality that
\b*
U^n_1 
\longrightarrow 
\mathbb{E}^0 \left[ \int_0^T \partial_x \delta_m u(\mu_s, X_s) \! \cdot \! \dd X_s \right]
&\mbox{in}& 
\mathbb{L}^1,
~~\mbox{as}~~n\to\infty,
\e*
and therefore also almost surely along some subsequence. 
\vspace{3mm}

\noindent {\it Step 3.} We next analyse the convergence of the third term
\b*
U^n_3
&\!\!\!\!:=&\!\!\!\!
\sum_{i=1}^{p_n} \int_0^1\!\! \lambda\! \int_0^1\!\! \mathbb{E}^0\hat{\mathbb{E}}^0 \left[  \partial_x \partial_{\hat{x}} \delta_m^2 u(\mu_{t_i^n}^{\lambda \lambda'}, X_{t_{i-1}^n}, \hat{\xi}_{t_{i-1}^n}) \! : \! (\Dpin X_{t_i^n} )(\Dpin \hat{X}_{t_i^n} )^\intercal \right] \dd \lambda' \dd \lambda,
\e*
by arguing in two steps:
\begin{itemize}
\item First, substituting $\mu_{t_{i-1}^n}$ to $\mu_{t_i^n}^{\lambda \lambda'}$, and $\hat{X}_{t_{i-1}^n}$ to $\hat{\xi}_{t_{i-1}^n}$, we compute that
\b*
\overline{U}^n_3
&\!\!\!\!:=&\!\!\!\!
\sum_{i=1}^{p_n} \!\int_0^1 \!\!\!\lambda \! \int_0^1 \!\!\mathbb{E}^0 \hat{\mathbb{E}}^0 \!\!\left[  \partial_x \partial_{\hat{x}} \delta_m^2 u(\mu_{t_{i}^n}, X_{t_{i-1}^n}, \hat{X}_{t_{i-1}^n}) \! : \! (\Dpin X_{t_i^n} )(\Dpin \hat{X}_{t_i^n} )^\intercal \right] \dd \lambda' \dd \lambda \\
\\
&\!\!\!\!=&\!\!\!\!\!
\dfrac{1}{2} \mathbb{E}^0 \hat{\mathbb{E}}^0\!\! \left[ \sum_{i=1}^{p_n}  H_{t_{i-1}^n} \! : \! (\Dpin X_{t_i^n} )(\Dpin \hat{X}_{t_i^n} )^\intercal \right]\!,
\,\mbox{with}\,
H_s \!:=\! \partial_x \partial_{\hat{x}} \delta_m^2 u(\mu_s, X_s, \hat{X}_s),
\e*
which leads by the polarized version of Lemma \ref{lemma} to:
\b*
\overline{U}^n_3
\longrightarrow
\dfrac{1}{2} \mathbb{E}^0 \hat{\mathbb{E}}^0  \left[ \int_0^T  \partial_x \partial_{\hat{x}} \delta_m^2 u (\mu_s, X_s, \hat{X}_s) \! : \! \dd \langle X, \hat{X} \rangle_s \right],
&\mbox{in}&
\L^1,~\mbox{as}~n\to\infty.
\e*			
\item we next control the error $U^n_3\!-\!\overline{U}^n_3\!=\!\mathbb{E}^0 \hat{\mathbb{E}}^0\big[\sum_{i=1}^{p_n}\varepsilon_{t_{i-1}^n} \!\! :\!\! (\Delta^{\pi^n} X_{t_i^n})(\Delta^{\pi^n} \hat{X}_{t_i^n})^\intercal \big]$, with
\b*
\varepsilon_{t_{i-1}^n} 
&\hspace{-3mm}:=& \hspace{-3mm}
\int_0^1\!\!\! \int_0^1 \!\!\!\lambda\! \left(  \partial_x \partial_{\hat{x}} \delta_m^2 u(\mu_{t_i^n}^{\lambda \lambda'}, X_{t_{i-1}^n}, \hat{X}_{t_{i-1}^n}) 
\!-\!  \partial_x \partial_{\hat{x}} \delta_m^2 u(\mu_{t_{i-1}^n}, X_{t_{i-1}^n}, \hat{\xi}_{t_{i-1}^n}) \right) \dd \lambda' \dd \lambda.
\e*
We write the proof for $d = 1$, as the $d$-dimensional case does not raise any difficulty.
\be
\Big| \sum_{i=1}^{p_n} \varepsilon_{t_{i-1}^n} (\Dpin X_{t_i^n} )(\Dpin \hat{X}_{t_i^n} ) \Big| 
&\!\!\!\leq&\!\!\! 
\sum_{i=1}^{p_n} |\Dpin X_{t_i^n} ||\Dpin \hat{X}_{t_i^n} ||\varepsilon_{t_{i-1}^n}| 
\nonumber\\
&\!\!\!\leq&\!\!\! 
\dfrac{1}{2} \sum_{i=1}^{p_n} \Big| \varepsilon_{t_{i-1}^n} \Big| |\Dpin X_{t_i^n} |^2 + \dfrac{1}{2}\sum_{i=1}^{p_n} \Big| \varepsilon_{t_{i-1}^n} \Big| |\Dpin \hat{X}_{t_i^n} |^2
\nonumber\\
&\hspace{-22mm}\leq&\hspace{-13mm} 
\varepsilon_{\pi^n} Q^n,
~\mbox{with}~
Q^n\!:=\! \sum_{i=1}^{p_n} |\Dpin X_{t_i^n} |^2 + \sum_{i=1}^{p_n} |\Dpin \hat{X}_{t_i^n} |^2,~~~~~~~
\label{convterm3}
\ee
and for $i=1,\ldots,n$,
\begin{equation*}
\big| \varepsilon_{t_{i-1}^n} \big| \leq \varepsilon_{\pi^n} 
:= \hspace{-3mm}
\sup_{\tiny\begin{array}{cc}
         0  \leq j \leq n-1
         \\
         t_j^n \leq s_1, s_2 \leq t_{j+1}^n
         \end{array}} 
         \hspace{-3mm}
         \Big|  \partial_x \partial_{\hat{x}} \delta_m^2 u(\mu_{s_1}, X_{s_2}, \hat{X}_{t_j^n}) -  \partial_x \partial_{\hat{x}} \delta_m^2 u(\mu_{t_j^n}, X_{t_j^n}, \hat{X}_{t_j^n})\Big|.
			\end{equation*}
Notice that the map $g:(s_1, s_2, r)\longmapsto \partial_x \partial_{\hat{x}} \delta^2_m u(\mu_{s_1}, X_{s_2}, \hat{X}_r)$ is a.s. continuous on the compact $[0, T]^3$, therefore it is uniformly continuous, and thus
\begin{equation*}
\varepsilon_{\pi^n} = \sup_{0 \leq i \leq n-1} \sup_{t_{i-1}^n \leq s_1, s_2\leq t_i^n} |g(s_1, s_2, t_{i-1}^n) - g(t_{i-1}^n, t_{i-1}^n, t_{i-1}^n)| 
\longrightarrow 0,
~\mbox{a.s.}
\end{equation*}
Since $Q^n\longrightarrow \langle X \rangle_T + \langle \hat{X} \rangle_T < + \infty$ in $\L^1$, we deduce from \eqref{convterm3} together with the dominated convergence theorem (using the fact that $\varepsilon_\pi$ is uniformly bounded, because $\partial_x \partial_{\hat{x}} \delta_m^2 u$ is bounded) that the error term converges towards 0 in $\mathbb{L}^1$, and therefore that it is still the case after taking conditional expectations. This last convergence in $\mathbb{L}^1$ yields the a.s. convergence along a subsequence.
\end{itemize}

\noindent {\it Step 4.}
By following the same line of argument as in Step 3, we also obtain the following convergence in $\L^1$ for the second term
\begin{align*}
\sum_{i=1}^{p_n} \int_0^1 \!\!\mathbb{E}^0\!\! \left[ \partial^2_x \delta_m u(\mu_{t_{i-1}^n}^\lambda, \xi_{t_{i-1}^n}) \! : \! (\Dpin X_{t_i^n} )(\Dpin X_{t_i^n} )^\intercal  \right] \dd \lambda
\longrightarrow 
\mathbb{E}^0 \!\!\left[ \int_0^T \!\!\!\!\partial_x^2 \delta_m(\mu_s, X_s) \! : \! \dd \langle X \rangle_s \right].
\end{align*}
\end{proof}

\section{It\=o\,-Wentzell's formula}
	\label{sec:Ito-Wentzell}

In addition to the continuous semimartingale $X$ of \eqref{def:X} with canonical decomposition 
$$
X=X_0+M+A,
$$
we now consider the extension from a deterministic function $u$ to a process $U : [0, T] \times \mathcal{P}_2(\mathbb{R}^d) \times \Omega \longrightarrow \mathbb{R}$, with the following dynamics for the random field $U_t(m)$:
\be\label{dyn:U}
U_t (m) 
&=& 
U_0(m)
+\int_0^t \phi_r(m) \! \cdot \! \dd D_r + \int_0^t \psi_r(m) \! \cdot \! \dd L_r,
\ee
where $C$ is a finite-variation process and $L$ is a martingale. Our main result is the following It\=o\,-Wentzell formula which will be established under the following conditions.

\begin{assumption}\label{assum:ItoWentzell}
The maps $f \in \left\lbrace U_0,\phi_t, \psi_t,t\in[0,T] \right\rbrace$ defined on $\Pc_2(\R^d)$ satisfy

{\rm(IW1)} $\delta_m f, \partial_x^2 \delta_m f, \delta_m^2 f, \partial_x \partial_{\hat{x}} \delta_m^2 f$ exist and are continuous;

{\rm(IW2)} $f, \partial_x^2 \delta_m f, \partial_x \partial_{\hat{x}} \delta_m^2 f$ are bounded;

\noindent and the processes $X$ together with the driving processes $B,N$ of the random field $U$ satisfy

{\rm(IW3)} $X_0,|A|_{\rm TV}$ and $\langle M \rangle_T$ are square integrable, and both $|D|_{\rm TV}$, $\langle L \rangle_T$ are bounded.
\end{assumption}

We observe that the boundedness condition on $|D|_{\rm TV}$ and $\langle L \rangle_T$ can be weakened at the price of stronger boundedness conditions on $\partial_x\delta_m\phi$ and $\partial_x\delta_m\psi$. We deliberately choose this setup in order to compare to the conditions of dos Reis \& Platonov \cite{reis2022}.

\begin{theorem} \label{thm:ItoWentzell}
Let Assumption \ref{assum:ItoWentzell} hold, and further assume that, for all compact $K \subset \mathcal{P}_2(\mathbb{R}^d)$, we have:
\begin{equation}\label{eq:integrability-random-field}
 \sup_{(t, m) \in [0,T] \times K} \sup_{x, \hat x \in \mathbb{R}^d} \Big\{ | \partial_{x} \delta_m U_t(m,x) | + | \partial_{xx}^2 \delta_m U_t(m,x) | + | \partial_{x\hat x}^2 \delta_{mm}^2 U_t(m,x,\hat x) | \Big\} < \infty, \ \mathbb{P}-\mbox{a.s.} 
 \end{equation}
 Then: \\
\noindent {\rm (i)}
All derivatives $\delta_m U, \partial_x^2 \delta_m U, \delta_m^2 U, \partial_x \partial_{\hat{x}} \delta_m^2 U$ exist, are continuous a.s., and are semimartingales defined by the decomposition for $i, j = 1, \ldots, d$:
\b* 
	\partial_{x_i}\delta_mU_t
&\hspace{-3mm}=&\hspace{-3mm}
\partial_{x_i} \delta_m U_0
+ \int_0^t \! \partial_{x_i}\delta_m \phi_s \! \cdot \! \dd D_s 
+ \int_0^t \partial_{x_i}\delta_m\psi_s \! \cdot \!\dd L_s,
\\
\partial_{x_i, x_j}^2 \delta_m U_t 
&\hspace{-3mm}=&\hspace{-3mm} 
\partial_{x_i, x_j}^2 \delta_m U_0+ \int_0^t \partial_{x_i, x_j}^2 \delta_m \phi_s \! \cdot \! \dd D_s + \int_0^t \partial_{x_i, x_j}^2 \delta_m \psi_s \! \cdot \! \dd L_s
\\
\partial_{x_i, \hat{x}_j}^2\delta^2_mU_t
&\hspace{-3mm}=&\hspace{-3mm}
\partial_{x_i, \hat{x}_j}^2\delta^2_m U_0
+\!\! \int_0^t \!\! \partial_{x_i, \hat{x}_j}^2\delta^2_m\phi_s \! \cdot \! \dd D_s 
\!+ \int_0^t \! \partial_{x_i, \hat{x}_j}^2\delta^2_m\psi_s \! \cdot \! \dd L_s.
\e*
{\rm (ii)} We have, $\mathbb{P}$-a.s.,
\begin{align*}
U_T(\mu_T) \!-\! U_0(\mu_0) 
&= \mathbb{E}^0 \left[ \int_0^T  \partial_x \delta_m U_s(\mu_s, X_s) \! \cdot \! \dd X_s + \dfrac{1}{2}  \partial^2_x \delta_m U_s(\mu_s, X_s) \! : \! \dd \langle X \rangle_s \right] \\
&
+ \mathbb{E}^0 \hat{\mathbb{E}}^0 
   \left[  \int_0^T\dfrac{1}{2}\partial_x \partial_{\hat{x}} \delta_m^2 
                                          U_s (\mu_s, X_s, \hat{X}_s) 
                                          \! : \! \dd \langle X, \hat{X} \rangle_s 
  \right]
\\
&
+\!\!\int_0^T \!\!\!\phi_s(\mu_s) \! \cdot \! \dd D_s 
                    \!+\! \psi_s(\mu_s) \! \cdot \! \dd L_s 
+\mathbb{E}^0 \left[ \int_0^T\!\!\! \partial_x \delta_m \psi_s(\mu_s, X_s) \! : \! \dd \langle L, M \rangle_s \right]\!,
\end{align*}
where $\mathbb{E}^0\big[ \cdot \big] := \mathbb{E}\big[ \cdot | \mathcal{F}_T^0, D, L\big]$ and $\hat{\mathbb{E}}^0\big[ \cdot \big] := \mathbb{E}\big[ \cdot | \mathcal{F}_T^0, D, L, X\big]$. 
\end{theorem}
	
\begin{proof} We organize the proof in several steps.

\noindent {\bf 1.} We start by the existence and continuity of $\delta_m U, \partial_x^2 \delta_m U, \delta_m^2 U, \partial_x \partial_{\hat{x}} \delta_m^2 U$.  We first show that the  functional linear derivative $\delta_mU_t$ exists for all $t\in[0,T]$, and is given by the first expression in (i), i.e.
\be\label{deltamU}
\delta_mU_t(m,x)
&=&
\delta_m U_0(m, x) 
+ \int_0^t \! \delta_m \phi_s (m, x) \! \cdot \! \dd D_s 
+ \int_0^t \delta_m\psi_s(m, x) \! \cdot \! \dd L_s.
\ee
For arbitrary $m, m' \in \mathcal{P}_2(\mathbb{R}^d)$ with barycenter $m^\lambda:=(1-\lambda)m+\lambda m'$, it follows from the decomposition of $U$ and the definition of the linear functional derivative for the maps $U_0,\phi_s,\psi_s$, as guaranteed by Assumption (IW1), that:
\b*
U_t(m') \!-\! U_t(m) 
&\hspace{-3mm}=& \hspace{-3mm}
U_0(m') \!-\! U_0(m) \!+\!\! \int_0^t \!\!(\phi_s(m') \!-\! \phi_s(m)) \! \cdot \! \dd D_s 
\!+\!\! \int_0^t \!\!( \psi_s(m') \!-\! \psi_s(m)) \! \cdot \!\dd L_s
\\
&\hspace{-40mm}=& \hspace{-22mm}
\!\int_0^1\hspace{-2mm} \int\!\! \delta_m U_0(m^\lambda, x) (m' \!-\! m)(\dd x) \dd \lambda
			\!+\! \int_0^t \!\!\left( \int_0^1\hspace{-2mm} \int \delta_m \phi_s (m^\lambda, x) (m' \!-\! m) (\dd x) \dd \lambda \right) \! \cdot \! \dd D_s \\
\\
&&\hspace{32mm}
+\! \int_0^t \!\!\left( \int_0^1\hspace{-2mm} \int \delta_m \phi_s (m^\lambda, x) (m' \!-\! m) (\dd x) \dd \lambda \right) \! \cdot \! \dd L_s,
\e*
By continuity and boundedness of $\phi$, $\psi$ and their derivatives, we may apply stochastic Fubini theorem (see \cite{protter2012stochastic}), which provides: $
U_t(m') - U_t(m)
= 
\int_0^1\!\!\int F_t(m^\lambda,x)(m' - m)(\dd x) \dd \lambda
$, where $F_t(m,x)$ is given by the right hand side of \eqref{deltamU}.
Moreover, it follows from (IW2) that the maps $\delta_m U_0, \delta_m \phi_s, \delta_m\psi_s$ have linear growth in $x$, uniformly in $m$. As $\E[|D|^2_{\rm TV}+\langle L\rangle_T]<\infty$ by (IW3), this implies that the map $F_t(m,x)$ also has linear growth in $x$, uniformly in $m$. Notice also that 
\begin{itemize}
\item $F_t$ inherits the continuity of $\delta_mU_0$, $\delta_m\phi_t$, and $\delta_m\psi_t$, by the dominated convergence theorem due to their boundedness, uniformly in $t,m$, assumed in (IW2). We may then conclude that $\delta_mU_t=F_t$ by the definition of the linear functional derivative. 
\item $\partial^2_{x}\delta_mU$ exists and inherits the continuity of $\partial^2_{x}\delta_mU_0$, $\partial^2_{x}\delta_m\phi$, and $\partial^2_{x}\delta_m\psi$, by the dominated convergence theorem due to their boundedness, uniformly in $t,m$, assumed in (IW2)
\end{itemize}

Finally, observe that the coefficients of the SDEs driving $U$ and $\delta_mU$ (with fixed $x$) satisfy the same conditions in (IW1) and (IW2). Applying the previous argument to the process $\delta_mU_t(m,x)$, for fixed $x$, it follows that $\delta^2_mU_t$ and $\partial_x\partial_{\hat x}\delta^2_mU_t(m,x,\hat x)$ also exist and are continuous, with decomposition given by the third expression in \rm{(i)}. 

\noindent {\bf 2.}
	Let $\pi^n : 0 = t_0^n < t_1^n < \ldots < t_{p_n}^n = T$ be a dense sequence of partitions of $[0,T]$. As in the proof of It\=o's formula, we start from the telescopic decomposition:
	\begin{align}
		U_{t_i^n} (\mu_{t_i^n}) - U_{t_{i-1}^n} (\mu_{t_{i-1}^n} ) = R_1 + R_2 ;~~&R_1 := U_{t_i^n} (\mu_{t_i^n}) - U_{t_i^n} (\mu_{t_{i-1}^n} ),
		 \label{Telescop-IW}
		 \\
		&R_2 := U_{t_i^n} (\mu_{t_{i-1}^n} ) \!-\! U_{t_{i-1}^n} (\mu_{t_{i-1}^n} )
		           =\! \int_{t_{i-1}^n}^{t_i^n}\!\!\! \dd U_s(\mu_{t_{i-1}^n}),
		\nonumber
	\end{align}
with dynamics of $\{U_s(m),s\ge 0\}$ given by \eqref{dyn:U}. We next further compute $R_1$ by using the definition of the functional linear derivative:
$$
R_1 
\!=\!\!
\int_0^1\!\!\!\! \int \!\!\delta_m U_{t_i^n} (\mu_{t_{i-1}^n}^\lambda, .)
\!\dd (\mu_{t_i^n} \!-\! \mu_{t_{i-1}^n} )\!\dd \lambda
\!=\!\!
\int_0^1\!\! \mathbb{E}^0\! \big[ \delta_m U_{t_i^n} (\mu_{t_{i}^n} ^\lambda, \!X_{t_i^n}) \!-\!  \delta_m U_{t_i^n} (\mu_{t_{i}^n} ^\lambda, \!X_{t_{i-1}^n}) \big] \dd \lambda.
$$
By the second order Taylor theorem, we may rewrite this as
$$
R_1
\!=\!\! \int_0^1\!\! \mathbb{E}^0 \left[ \partial_x \delta_m U_{t_i^n} (\mu_{t_{i}^n}^\lambda, X_{t_{i-1}^n}) \! \cdot \!\Dpin X_{t_i^n} 
\!+\! 
\dfrac{1}{2} \partial^2_x \delta_m U_{t_i^n}  (\mu_{t_{i}^n}^\lambda, \xi_{t_{i-1}^n}) \! : \!(\Dpin X_{t_i^n})^{\otimes 2} \right] \dd \lambda,
$$
for some r.v. $\xi_{t_{i-1}^n}$ lying between $X_{t_{i-1}^n}$ and $X_{t_i^n}$. Denoting $\gamma_s:=\partial_x \delta_m U_s$, we compute that
\b*
\Big\{\gamma_{t^n_i}(., X_{t_{i-1}^n})
\Big\}_{_{\mu_{t_{i-1}^n}^\lambda}}^{^{\mu_{t_{i-1}^n}}}
&=&
\int_0^1\!\!\!\int\!\! \delta_m \gamma_{t^n_i}(\mu_{t_{i}^n}^{\lambda \lambda'}, X_{t_{i-1}^n}, \hat{x}) (\mu_{t_{i}^n}^\lambda \!-\! \mu_{t_{i-1}^n})(\dd \hat{x}) \dd \lambda' 
\\
&=& 
\lambda \int_0^1 \hat{\mathbb{E}}^0 \left[ \left\lbrace \delta_m \gamma_{t^n_i}(\mu_{t_{i}^n}^{\lambda \lambda'}, X_{t_{i-1}^n}, \cdot) \right\rbrace_{\hat{X}_{t_{i-1}^n}}^{\hat{X}_{t_i^n}}   \right] \dd \lambda' \\
&=&
\lambda \int_0^1 \hat{\mathbb{E}}^0 \left[ \partial_{\hat{x}} \delta_m \gamma_{t^n_i}(\mu_{t_{i}^n}^{\lambda \lambda'}, X_{t_{i-1}^n}, \hat{\xi}_{t_{i-1}^n}) \Dpin \hat{X}_{t_i^n} \right] \dd \lambda',
\e*
for some $\hat{\xi}_{t_{i-1}^n}$ between $\hat{X}_{t_{i-1}^n}$ and $\hat{X}_{t_i^n}$. By the regularity results obtained in Step 1 of this proof, we may also write $\gamma_{t^n_i}(\mu_{t_{i-1}^n}, X_{t_{i-1}^n}) = \gamma_{t^n_{i-1}}(\mu_{t_{i-1}^n}, X_{t_{i-1}^n}) + \int_{t_{i-1}^n}^{t_i^n} \dd \gamma_s (\mu_{t_{i-1}^n}, X_{t_{i-1}^n})$. Substituting back the expression of the map $\gamma$, this provides:
\b*
R_1 
&=& 
\mathbb{E}^0 \left[ \partial_x \delta_m U_{t}(\mu_{t_{i-1}^n}, X_{t_{i-1}^n}) \! \cdot \! \Dpin X_{t_i^n} \right] 
+ \mathbb{E}^0 \left[ \Big(\int_{t_{i-1}^n}^{t_i^n} \dd \gamma_s (\mu_{t_{i-1}^n}, X_{t_{i-1}^n})\Big) \! \cdot \! \Dpin X_{t_i^n} \right] \\
\\
&&
+\; \mathbb{E}^0 \hat{\mathbb{E}}^0 \left[ \int_0^1\!\!\! \int_0^1 \lambda \partial_{\hat{x}} \delta_m \partial_x \delta_m U_{t_i^n}  (\mu_{t_{i}^n}^{\lambda \lambda'}, X_{t_{i-1}^n}, \hat{\xi}_{t_{i-1}^n}) \! : \! (\Dpin X_{t_i^n}) (\Dpin \hat{X}_{t_i^n})^\intercal \dd \lambda' \dd \lambda  \right] 
\\
&&+ \int_0^1 \mathbb{E}^0 \left[ \dfrac{1}{2} \partial^2_x \delta_m U_{t_i^n}  (\mu_{t_{i}^n}^\lambda, \xi_{t_{i-1}^n}) \! : \! (\Dpin X_{t_i^n})(\Dpin X_{t_i^n})^\intercal  \right] \dd \lambda,
\e*
where $\dd \gamma_s (\mu_{t_{i-1}^n}, X_{t_{i-1}^n})
=
f_s(\mu_{t_{i-1}^n}, X_{t_{i-1}^n})\dd D_s+g_s(\mu_{t_{i-1}^n}, X_{t_{i-1}^n})\dd L_s$,
$s\in[t^n_{i-1}, t^n_i)$, with
\b*
(f_s,g_s) &:=&\partial_x \delta_m(\phi_s,\psi_s).
\e*
Summing the decomposition \eqref{Telescop-IW}, and denoting by $t^n(s)$ the closest subdivision point strictly to the left of $s$, this provides:
\b*
U_T(\mu_T) - U_0(\mu_0)
&=& 
\int_{0}^{T} \dd U_s (\mu_{t^n(s)}) 
+\int_{0}^{T} \mathbb{E}^0 \left[ \partial_x \delta_m U_{t^n(s)}(\mu_{t^n(s)}, X_{t^n(s)}) \! \cdot \! \dd X_s \right] 
\\
&& \hspace{-10mm}
+  \sum_{i=1}^{p_n} 
    \mathbb{E}^0 \Big[ 
                                  \Big(
                                  \int_{t_{i-1}^n}^{t_i^n} f_s(\mu_{t_{i-1}^n}, X_{t_{i-1}^n})\dd D_s
                                                                  +g_s(\mu_{t_{i-1}^n}, X_{t_{i-1}^n})\dd L_s  
                                 \Big) \! \cdot \! \Dpin X_{t_i^n}
                                 \Big] 
\\
&& \hspace{-28mm}
+ \sum_{i=1}^{p_n} \mathbb{E}^0 \hat{\mathbb{E}}^0\!\!
   \left[ \int_0^1\!\!\!\int_0^1 \lambda \partial_{\hat{x}} \delta_m \partial_x \delta_m U_{t_i^n} (\mu_{t_{i}^n}^{\lambda \lambda'}, X_{t_{i-1}^n}, \hat{\xi}_{t_{i-1}^n}) \! : \! (\Dpin X_{t_i^n} ) (\Dpin \hat{X}_{t_i^n} )^\intercal \dd \lambda' \dd \lambda  \right] 
\\
&& \hspace{-28mm}
+\sum_{i=1}^{p_n} \int_0^1 \mathbb{E}^0\!\! \left[ \dfrac{1}{2} \partial^2_x \delta_m U_{t_i^n} (\mu_{t_{i}^n}^\lambda, \xi_{t_{i-1}^n}) \! : \! (\Dpin X_{t_i^n} )(\Dpin X_{t_i^n} )^\intercal  \right] \dd \lambda.
\e*	
{\bf 3.} We now show that the different terms of the last decomposition converge towards the formula announced in the theorem, for a certain dense sequence of subdivisions. 

{\bf 3.1} We first prove that, after possibly passing to a subsequence,
\be\label{ItoWentzel-1}
\int_{0}^{T} \dd U_s (\mu_{t^n(s)})  
=\int_0^T \phi_s (\mu_{t^n(s)}) \! \cdot \! \dd D_s+\psi_s (\mu_{t^n(s)}) \! \cdot \! \dd L_s
\longrightarrow
\int_0^T \dd  U_s(\mu_s) \text{ a.s.}
\ee
By the dominated convergence theorem for the Stieltjes integral $\int \cdot \dd D_s$, and the boundedness of $\phi$, we obtain the convergence of the finite variation part
\b*
\int_0^T \phi_s (\mu_{t^n(s)}) \! \cdot \! \dd D_s 
&\longrightarrow& 
\int_0^T \phi_s (\mu_s) \! \cdot \! \dd D_s, \text{ a.s.}
\e*
As for the stochastic integral component, we estimate by the It\=o isometry that
\be\label{proofiwv3}
\mathbb{E} \left[  \left(  \int_0^T  (\psi_s(\mu_{t^n(s)}) - \psi_s(\mu_s)) \! \cdot \! \dd L_s 
                           \right)^2
                   \right]
\!\!\!\!&\le&\!\!\!\!
\mathbb{E} \left[  \int_0^T     (\psi_s(\mu_{t^n(s)}) - \psi_s(\mu_s))^{\otimes 2} \! : \!\dd \langle L \rangle_s \right]\!,~~~~
\ee
Since $\psi$ and $\mu$ are a.s. continuous and $\psi$ is bounded, it follows from dominated convergence for the Stieltjes stochastic integral $\int \cdot \dd \langle N \rangle_s$ that $\int_0^T\big(\psi_s(\mu_{t^n(s)}) - \psi_s(\mu_s)\big)^{\otimes 2} \! : \! \dd \langle N \rangle_s \longrightarrow 0$, a.s. Furthermore, $| \int_0^T\big(\psi_s(\mu_{t^n(s)}) - \psi_s(\mu_s)\big)^{\otimes 2} \!:\!\dd \langle L \rangle_s|\leq 4 || \psi ||_{\infty}^2  \mbox{Tr} [\langle L \rangle_T] $, which is in $\mathbb{L}^1$. We then deduce from \eqref{proofiwv3} and the dominated convergence theorem that
\b*
\int_0^T  \psi_s(\mu_{t^n(s)}) \! \cdot \! \dd L_s 
&\longrightarrow& 
\int_0^T \psi_s(\mu_s) \! \cdot \! \dd L_s \text{ in } \L^2,
~\mbox{and a.s. along some subsequence,}
\e*
thus completing the proof of \eqref{ItoWentzel-1}.

For the remaining terms, we use the same method as in the proof of Theorem \ref{thm:Ito}, by arguing that the sequence inside the conditional expectations converges in $\mathbb{L}^1$ towards the desired results.

{\bf 3.2.} Denote $H_s := \partial_x \delta_m U_s (\mu_s, X_s)$. The convergence of the second term is implied by the following two convergence results: 
\be
\int_0^T H_{t^n(s)} \! \cdot \! \dd A_s \longrightarrow \int_0^T H_s \! \cdot \! \dd A_s,
&\mbox{and}&
\int_0^T H_{t^n(s)} \! \cdot \! \dd M_s
\longrightarrow
\int_0^T H_s \! \cdot \! \dd M_s, 
~~\mbox{in}~\L^1.
\ee
The first convergence of the finite variation part follows from the a.s. pathwise continuity of the process $H$, together with the dominated convergence theorem for the Stieltjes integral $\int \cdot \dd A_s$ together with the linear growth of $\partial_x \delta_m U_s$ in the $x-$variable, as implied by (IW2). Similarly, it follows from the BDG inequality and the dominated convergence theorem for the Stieltjes integral $\int \cdot \dd \langle M \rangle_s$ that $\int_0^TH_{t^n(s)} \! \cdot \! \dd M_s \longrightarrow \int_0^TH_s \! \cdot \! \dd M_s$ in $\L^1$. 

{\bf 3.3.} In this step, we justify the $\L^1$ convergence of the third term. Denoting $F_t:=\int_0^t f_s(\mu_s, X_s) \dd D_s$, and $G_t\!:=\! \int_0^t \!\!g_s(\mu_s, X_s) \dd L_s$, we shall now show that
\b*
\Phi^n
:=
\sum_{i=1}^{p_n} 
\int_{t_{i-1}^n}^{t_i^n} \!\!f_s(\mu_{t_{i-1}^n}, X_{t_{i-1}^n}) \dd D_s \cdot \Dpin \!X_{t_i^n}
&\longrightarrow&
\mbox{Tr}[\langle X,F \rangle_T]=0,~\mbox{in}~\L^1\!,~\mbox{as}~n\to\infty
\\
\Psi^n
:=
\sum_{i=1}^{p_n} 
\int_{t_{i-1}^n}^{t_i^n} \!\!g_s(\mu_{t_{i-1}^n}, X_{t_{i-1}^n}) \dd L_s \cdot \Dpin \!X_{t_i^n} 
&\longrightarrow&
\mbox{Tr}[\langle X,G \rangle_T], ~\mbox{in}~\L^1\!,~\mbox{as}~n\to\infty,
\e*
where we have $\langle F, X \rangle_T=0$, a.s. due to the fact that the process $F$ has finite variation. In order to prove the first convergence, we use Lemma \ref{lemma} to conclude that, along some subsequence,
\b*
\sum_{i=1}^{p_n} (\Dpin X_{t_i^n} )\cdot(\Dpin F_{t_i^n} )  
\longrightarrow 
\mbox{Tr}[\langle X, F \rangle_T], \text { in } \mathbb{L}^1.
\e*
Denote $\delta^n_s := f_s(\mu_{t^n(s)}, X_{t^n(s)}) - f_s(\mu_s, X_s)$. We have, by Cauchy-Schwarz inequality: 
\b*
\E\Big|\Phi^n-\sum_{i=1}^{p_n} (\Dpin\! X_{t_i^n} ) \! \cdot \! (\Dpin\! F_{t_i^n} )\Big| 
&=&
\E\Big|\sum_{i=1}^{p_n} (\Dpin\! X_{t_i^n}) \! \cdot \! \int_{t_{i-1}^n}^{t_i^n} \delta^n_s
\dd D_s\Big|
\\ &\le&
C\mathbb{E}\Big[\sum_{i=1}^{p_n} | \Dpin\! X_{t_i^n} |^2 \Big]^{1/2}
                           \mathbb{E}\Big[\sum_{i=1}^{p_n} \Big| \int_{t_{i-1}^n}^{t_i^n} \delta^n_s
\dd D_s \Big|^2
                           \Big]^{1/2}.
\\
&\le&
C\mathbb{E}\Big[\sum_{i=1}^{p_n} | \Dpin\! X_{t_i^n} |^2 \Big]^{1/2}
                           \mathbb{E}\Big[\int_0^T |\delta^n_s|^2 \dd |D|_s
                           \Big]^{1/2}.
\e*
By boundedness of $\sum_{i=1}^{p_n} | \Dpin\! X_{t_i^n} |^2$ in $\mathbb{L}^1$, boundedness of $\delta^n$, Conditions (IW3) and the fact that $\delta^n_s \to 0$, $\mathrm{d}s \otimes \mathrm{d}\mathbb{P}$ a.e, as $n \to \infty$, we conclude by the dominated convergence theorem that:
$$\E\Big|\Phi^n-\sum_{i=1}^{p_n} (\Dpin X_{t_i^n} ) \! \cdot \! (\Dpin F_{t_i^n} )\Big| \underset{n \to \infty}{\longrightarrow} 0. $$
		
A similar argument allows to justify the $\L^1$ convergence of $\Psi^n$ towards $\mbox{Tr}[\langle G, X \rangle_T]$ in  $\mathbb{L}^1$. Indeed, using again Lemma \ref{lemma}, we are reduced to the following estimate involving the process $\eta^n_s := g_s(\mu_{t^n(s)}, X_{t^n(s)}) - g_s(\mu_s, X_s)$: 
\b*
\E\Big|\Psi^n-\sum_{i=1}^{p_n} (\Dpin\! X_{t_i^n} ) \! \cdot \! (\Dpin\! G_{t_i^n} )\Big| 
&=&
\E\Big|\sum_{i=1}^{p_n} (\Dpin\! X_{t_i^n}) \! \cdot \! \int_{t_{i-1}^n}^{t_i^n} \eta^n_s
\dd L_s\Big|
\\ &\le&
C\mathbb{E}\Big[\sum_{i=1}^{p_n} | \Dpin\! X_{t_i^n} |^2 \Big]^{1/2}
                           \mathbb{E}\Big[\sum_{i=1}^{p_n} \Big| \int_{t_{i-1}^n}^{t_i^n} \eta^n_s
\dd L_s \Big|^2
                           \Big]^{1/2}.
\\
&=&
C\mathbb{E}\Big[\sum_{i=1}^{p_n} | \Dpin\! X_{t_i^n} |^2 \Big]^{1/2}
                           \mathbb{E}\Big[\int_0^T |\eta^n_s|^2 \dd \langle L \rangle_s
                           \Big]^{1/2}.
\e*
by the Cauchy-Schwartz inequality and the It\=o isometry. Similarly to $\Phi^n$, we conclude by the dominated convergence theorem that:
$$\E\Big|\Psi^n-\sum_{i=1}^{p_n} (\Dpin X_{t_i^n} ) \! \cdot \! (\Dpin G_{t_i^n} )\Big| \underset{n \to \infty}{\longrightarrow} 0. $$

{\bf 3.4.} We finally prove that, a.s.:
		\be\label{eq:cv-second-1}
			\mathbb{E}^0\Big[\sum_{i=1}^{p_n} \int_0^1 \dfrac{1}{2} \partial^2_x \delta_m U_{t_i^n} (\mu_{t_{i}^n}^\lambda, \xi_{t_{i-1}^n}) \! : \! (\Dpin X_{t_i^n} )(\Dpin X_{t_i^n} )^\intercal  \dd \lambda \Big]  \\\longrightarrow \mathbb{E}^0\Big[ \dfrac{1}{2} \int_0^T \partial^2_x \delta_m u_s (\mu_s, X_s) \! : \! \dd \langle X \rangle_s \Big] \nonumber
		\ee
		and
		\be\label{eq:cv-second-2}
			\mathbb{E}^0\Big[\sum_{i=1}^{p_n}  \int_0^1  \int_0^1  \lambda  \partial_{\hat{x}}  \delta_m  \partial_x  \delta_m U_{t_i^n}  (\mu_{t_{i}^n}^{\lambda \lambda'},  X_{t_{i-1}^n},  \hat{\xi}_{t_{i-1}^n} ) \! : \! (\Dpin X_{t_i^n} )(\Dpin \hat{X}_{t_i^n} )^\intercal \dd  \lambda'  \dd  \lambda \Big] \\ 
			\longrightarrow \mathbb{E}^0\Big[\dfrac{1}{2}  \int_0^T  \partial_{\hat{x}}  \delta_m  \partial_{x}   \delta_m U_s (  \mu_s , X_s,   \hat{X}_s)   \! : \! \dd \langle X,   \hat{X} \rangle_s\Big] \nonumber
		\ee
		We only write the detailed argument for \eqref{eq:cv-second-1}, as it is similar for \eqref{eq:cv-second-2}. Introducing $ \varepsilon_{t_i^n} := \partial^2_x \delta_m U_{t_i^n} (\mu_{t_{i}^n}^\lambda, \xi_{t_{i-1}^n}) - \partial^2_x \delta_m U_{t_i^n} (\mu_{t_{i-1}^n}, \xi_{t_{i-1}^n})$, we have:
		\begin{align*}
		 \sum_{i=1}^{p_n} \int_0^1 &\dfrac{1}{2} \partial^2_x \delta_m U_{t_i^n} (\mu_{t_{i}^n}^\lambda, \xi_{t_{i-1}^n}) \! : \! (\Dpin X_{t_i^n} )(\Dpin X_{t_i^n} )^\intercal  \dd \lambda \\ =& \sum_{i=1}^{p_n}  \dfrac{1}{2} \partial^2_x \delta_m U_{t_i^n} (\mu_{t_{i-1}^n}, \xi_{t_{i-1}^n}) \! : \! (\Dpin X_{t_i^n} )(\Dpin X_{t_i^n} )^\intercal   \\ &+  \sum_{i=1}^{p_n} \int_0^1 \dfrac{1}{2} \varepsilon_{t_i^n} \! : \! (\Dpin X_{t_i^n} )(\Dpin X_{t_i^n} )^\intercal  \dd \lambda.
		 \end{align*}
		 By Lemma \ref{lemma}, the first term in the right hand side member of the equality tends to the desired quantity as $n \to \infty$. We then have to show that the last term goes to $0$. Observe that:
		 \begin{align*}
		\Big| \sum_{i=1}^{p_n} \int_0^1 \dfrac{1}{2} \varepsilon_{t_i^n} \! : \! (\Dpin X_{t_i^n} )(\Dpin X_{t_i^n} )^\intercal  \dd \lambda \Big| \le \Big( \int_0^1 \max_{1 \le i \le n} | \varepsilon_{t_i^n} | \dd \lambda \Big) \sum_{i=1}^n | \Dpin X_{t_i^n} |^2.
		 \end{align*}
		 For the same reason as in the proof of Theorem \ref{thm:Ito}, Step 3, we have $\int_0^1 \max_{1 \le i \le n} | \varepsilon_{t_i^n} | \dd \lambda \to 0$ as $n \to \infty$, a.s. Moreover, $\sum_{i=1}^n | \Dpin X_{t_i^n} |^2$ converges in $\mathbb{L}^1$ by Lemma \ref{lemma} and therefore is bounded up to some subsequence. Therefore, passing to this subsequence, we have 
		$$
		  \sum_{i=1}^{p_n} \int_0^1 \dfrac{1}{2} \varepsilon_{t_i^n} \! : \! (\Dpin X_{t_i^n} )(\Dpin X_{t_i^n} )^\intercal  \dd \lambda \underset{n \to \infty}{\longrightarrow} 0, \quad \mbox{a.s.} 
		  $$
		 Furthermore, by the assumption \eqref{eq:integrability-random-field}, $\max_{1 \le i \le n} | \varepsilon_{t_i^n} |$ is bounded by a random variable $Z$ which is $\mathcal{F}_T^0 \vee \sigma(D, L)$-measurable. Therefore, we have
		 $$ \Big| \sum_{i=1}^{p_n} \int_0^1 \dfrac{1}{2} \varepsilon_{t_i^n} \! : \! (\Dpin X_{t_i^n} )(\Dpin X_{t_i^n} )^\intercal  \dd \lambda \Big| \le Z \sum_{i=1}^n | \Dpin X_{t_i^n} |^2,$$
		  which is uniformly integrable for the conditional expectation $\mathbb{E}^0$, which finally implies that 
		 $$  \mathbb{E}^0\Big[\sum_{i=1}^{p_n} \int_0^1 \dfrac{1}{2} \varepsilon_{t_i^n} \! : \! (\Dpin X_{t_i^n} )(\Dpin X_{t_i^n} )^\intercal  \dd \lambda \Big] \underset{n \to \infty}{\longrightarrow} 0, \quad \mbox{a.s.} $$
\end{proof}
\subsection{Examples}
\subsubsection{Brownian Case}
Let us consider the special case where the process $X$ and the random field $U$ are It\=o processes defined by
\b*
\dd U_t (m) 
&=& 
\phi_t (m) \dd t + \psi_t(m) \cdot \dd W_t + \psi_t^0(m) \cdot \dd W_t^0, 
\\
\dd X_t &=& 
b_t \dd t + \sigma_t \dd W_t + \sigma_t^0 \dd W_t^0
\e*
where $W$, $W^0$ are Brownian motions. We choose here $\mathbb{F}^0 = (\mathcal{F}^0_t)_{0 \leq t \leq T}$ to be the filtration generated by $W^0$. This setting reduces to that of dos Reis \& Platonov \cite{reis2022}. The conditionally independent copy $\hat{X}$ is defined by
\b*
\dd \hat{X}_t 
&=& 
\hat{b}_t \dd t + \hat{\sigma}_t  \dd \hat{W}_t + \hat{\sigma}^0_t \dd W_t^0
\e*
where $\hat{b}, \hat{\sigma}, \hat{\sigma}^0, \hat{W}$ are conditionally independent copies of $b, \sigma, \sigma^0, W$, respectively. We rephrase our Theorem \ref{thm:ItoWentzell} in the present setting in order to compare it with the corresponding statement in \cite{reis2022}.

\begin{corollary}
For $f \in \left\lbrace U_0,\phi_t, \psi_t, \psi^0_t,t\in[0,T] \right\rbrace$, assume:

$\bullet$ $f, \delta_m f, \partial_x^2 \delta_m f, \delta_m^2 f, \partial_{\hat{x}}^2 \delta_m^2 f, \partial_x \partial_{\hat{x}} \delta_m^2 f$ exist and are continuous;

$\bullet$ $\partial_x^2 \delta_m f, \partial_{\hat{x}}^2 \delta_m^2 f, \partial_x \partial_{\hat{x}} \delta_m^2 f$ are bounded;

$\bullet$ $\mathbb{E}\left[ \int_0^T (|X_0|^2+| b_s|^2 + |\sigma_s \sigma_s^\intercal|^2 + |\sigma_s^0 (\sigma_s^0)^\intercal|^2 )\dd s  \right] < \infty$.
\\
We also assume that $U$ satisfies \eqref{eq:integrability-random-field}. Then $\delta_m U, \partial_x^2 \delta_m U, \delta_m^2 U, \partial_x \partial_{\hat{x}} \delta_m^2 U$ exist, are continuous a.s., and are It\=o processes driven by the Brownian motions $W$ and $W^0$, with coefficients defined by the corresponding derivatives of the coefficients of $U$. Moreover, we have:
\b*
U_T(\mu_T) - U_0(\mu_0) 
&\!\!=&\!\! \!\!
\int_0^T \phi_s(\mu_s) \dd s + \int_0^T \psi_s(\mu_s) \! \cdot \! \dd W_s + \int_0^T \psi_s^0(\mu_s) \! \cdot \! \dd W_s^0 
\\
&\!\!&\!\!\!\!
+ \mathbb{E}^0 \left[ \int_0^T  \partial_x \delta_m U_s(\mu_s, X_s) \! \cdot \! b_s \dd s + \int_0^T  (\sigma_s^0 )^\intercal \partial_x \delta_m U_s(\mu_s, X_s) \! \cdot \! \dd W_s^0 \right] 
\\
&\!\!&\!\!\!\!
+ \dfrac{1}{2}  \mathbb{E}^0 \left[ \int_0^T \partial^2_x \delta_m U_s(\mu_s, X_s) \! : \! (\sigma_s \sigma_s^\intercal + \sigma^0_s (\sigma^0_s)^\intercal) \dd s \right] 
\\
&\!\!&\!\!\!\!
+\mathbb{E}^0 \left[\int_0^T \partial_x \delta_m \psi_s^0 (\mu_s, X_s) \! : \! (\sigma_s^0)^\intercal \dd s \right] 
\\
&\!\!&\!\!\!\!
+ \dfrac{1}{2}  \mathbb{E}^0 \hat{\mathbb{E}}^0 \left[  \int_0^T\partial_x \partial_{\hat{x}} \delta_m^2 U_s (\mu_s, X_s, \hat{X}_s) \! : \! \sigma_s^0 (\hat{\sigma}_s^0)^\intercal \dd s \right], \mbox{ a.s.}
\e*
\end{corollary}

We thus find the same formula as in \cite{reis2022}, with the only additional hypothesis that the highest-order derivatives are bounded instead of square-integrable. By standard density argument as in Carmona \& Delarue \cite{CD1,CD2}, one may relax this boundedness condition, however we refrain from this additional development for presentation simplicity. 

\subsubsection{Semimartingale factor random field model}

Suppose that $X$ is a continuous semimartingale:
\begin{equation*}
X_t =X_0 + A_t + M_t \text{ for all } t \in [0, T],
\end{equation*}
where $(A_t)_t$ is a finite-variation process and $(M_t)_t$ is a martingale. In this section, we consider the case where the random field is defined by $U_t(m):=u(t,m,Y_t)$ for some factor process $Y = (Y_t)_{0 \leq t \leq T}$ which is another continuous semimartingale :
\begin{equation*}
Y_t = Y_0 + V_t + S_t,
\end{equation*}
with finite-variation process $V$, and a martingale $S$. Here, the deterministic function $u : (t, m, y) \in [0, T] \times \mathcal{P}_2(\mathbb{R}^d)\times \mathbb{R} \mapsto u(t, m, y) \in \mathbb{R}$ will be assumed to be sufficiently smooth. For this, we extend naturally the definition of the functional linear derivative by reducing to the standard definition once the variables $(t,y)$ are frozen. The second order functional linear derivative $\delta_m^2$ is also defined similarly. The following result is a direct restatement of Theorem \ref{thm:ItoWentzell} in the present context.

\begin{corollary} 
Let us suppose that

$\bullet$ $\partial_t u, \partial_y^2 u, \delta_m u, \partial_x^2 \delta_m u, \delta_m^2 u, \partial_x \partial_{\hat{x}} \delta_m^2 u, \delta_m \partial_y u, \partial_x \delta_m \partial_y u$ exist and continuous;

$\bullet$ $\partial_y^2 u, \partial_x^2 \delta_m u, \partial_x \partial_{\hat{x}} \delta_m^2 u, \partial_x \delta_m \partial_y u$ are bounded ;

$\bullet$ $X_0, |A|_{\rm{TV}}, |V|_{\rm{TV}}, \langle M \rangle_T$ and $S$ are square integrable.
\\
Then, denoting $\Theta_t = (t, \mu_t, Y_t)$ for all $t \in [0, T]$, we have:
\b*
u(\Theta_T) - u(\Theta_0) 
&=&
 \int_0^T \partial_t u(\Theta_s)\dd s + \partial_y u(\Theta_s)\!\cdot\!\dd Y_s + \dfrac{1}{2} \partial^2_y u(\Theta_s)\!:\!\dd \langle Y \rangle_s  
 \\
&&\hspace{-35mm}
+ \mathbb{E}^0 \!\!\left[ \int_0^T\!\!\!  \partial_x \delta_m u(\Theta_s, X_s)\!\cdot\!\dd X_s 
\!+\! \dfrac{1}{2}  \partial^2_x \delta_m u(\Theta_s, X_s)\!:\!\dd \langle X \rangle_s 
\!+\! \partial_x \delta_m \partial_y u(\Theta_s, X_s)\!:\!\dd \langle X, Y \rangle_s \right] 
\\
&&\hspace{-35mm}
+ \dfrac{1}{2}  \mathbb{E}^0 \hat{\mathbb{E}}^0\!\!\left[  \int_0^T\!\!\!\partial_x \partial_{\hat{x}} \delta_m^2 u (\Theta_s, X_s, \hat{X}_s)\!:\!\dd \langle X, \hat{X} \rangle_s \right] \text{ a.s.}
\e*
\end{corollary}
			
\begin{proof}
By the standard It\=o formula for finite-dimension It\=o processes, the random field $\big\{U_t(m) = u_t(t, m, Y_t)$, $(t,m) \in [0, T]\times\mathcal{P}_2(\mathbb{R}^d)\big\}$ has the following semimartingale decomposition:
\b*
\dd U_t (m) 
&=& 
\phi_t(m) \cdot\dd B_t + \psi_t(m) \cdot\dd N_t
\e*
with $B_t = (t, V_t,\langle S \rangle_t)$, $N_t = S_t$, $\phi_t(m) = \big(\partial_t u_t(m, Y_t), \partial_y u_t(m, Y_t),\frac{1}{2} \partial_y^2 u_t(m, Y_t)\big)$ and $\psi_t(m) = \partial_y u_t(m, Y_t)$.
The result is now a direct application of Theorem \ref{thm:ItoWentzell}. 
\end{proof}

\section{Application to Mean-Field Control: HJB Equation}
	\label{sec:control}
Let $\Omega = \mathcal{C}^0(\mathbb{R}_{+}, \mathbb{R}^d)^2 \times \mathcal{C}^0(\mathbb{R}_{+}, \mathbb{R}^{d_0})$ with canonical process $(X_t, Y_t, W_t^0) : (\omega, \omega^0) \in \Omega \mapsto (\omega, \omega^0)(t) \in \mathbb{R}^d \times \mathbb{R}^{d_0}$. The corresponding canonical filtration is denoted by $\F=\{\Fc_t,t\ge 0\}$. We also introduce the control space $\mathcal{A}$ consisting of all $\F-$progressively measurable processes $\alpha$ with values in a compact subset $A$ of a finite dimensional space.
		
Let $b,\sigma$, and $\sigma^0$ be given bounded maps
\b*
(b,\sigma,\sigma^0):\R_+\times\R^d\times\R^d \times\Pc_2(\R^d)\times A
&\longrightarrow&
\R^d\times\Mc_{d,d}(\R)\times\Mc_{d,d_0}(\R),
\e*
and
\b*
(k, \gamma, \gamma^0):\R_+\times\R^d 
&\longrightarrow&
\R^d\times\Mc_{d,d}(\R)\times\Mc_{d,d_0}(\R).
\e*
For $t \geq 0$ and $m \in \mathcal{P}_2(\mathbb{R}^d)$, we denote by $\mathcal{P}(t, y, m)$ the collection of all probability measures $\mathbb{P}$ on $(\Omega,\Fc)$ satisfying: 
\begin{itemize} 
\item[(i)] $W^0$ is a $\mathbb{P}$-Brownian Motion;
\item[(ii)] The process $Y$ is defined by
\b*
Y_t=y,
&\mbox{and}&
\dd Y_s = k(s, Y_s) \dd s + \gamma (s,  Y_s) \dd B^\P_s + \gamma^0(s, Y_s) \dd W^0_s,~s\ge t,~\P-\mbox{a.s}
\e*
for some $\P$-Brownian motion $B^\P$;
\item[(iii)] the conditional marginal law of $X_t$ given $W^0$ is $\mathbb{P}_{X_t}^{W^0}:=\mathbb{P} \circ (X_t | W^0)^{-1} = m$, and there exists a control process $\alpha\in\Ac$ such that for $s \geq t$:
\begin{align*}
\dd X_s 
= b(s, X_s, Y_s, \mathbb{P}_{X_s}^{W^0}, \alpha_s) \dd s 
&+ \sigma(s, X_s, Y_s, \mathbb{P}_{X_s}^{W^0}, \alpha_s) \dd W_s^\mathbb{P} 
\\
&+ \sigma^0(s, X_s, Y_s, \mathbb{P}_{X_s}^{W^0}, \alpha_s) \dd W_s^0,
\end{align*}
$\P-$a.s. for some $\mathbb{P}$-Brownian motion $W^\mathbb{P}$. Here, $\mathbb{P}_{X_s}^{W^0}$ is the conditional law of $X_s$ under $\mathbb{P}$ given $\lbrace W_r^0, r \geq 0 \rbrace$. 
\end{itemize}	
		
We define the objective function as 
\b*
J(t, y, m, \mathbb{P}) 
:= 
\mathbb{E}_t \left[ \int_t^T f^{\alpha_s}(Y_s, \mathbb{P}_{X_s}^{W^0})\dd s 
                             + g(Y_T, \mathbb{P}_{X_T}^{W^0})
                      \right],
&\mbox{for all}&
\P\in\Pc(t, y, m),
\e*
with running reward map $f : A \times \R^d \times \mathcal{P}_2(\mathbb{R}^d) \rightarrow \mathbb{R}$, and final reward $g : \mathcal{P}_2(\mathbb{R}^d) \rightarrow \R^d \times \mathbb{R}$. The dynamic version of the control problem is defined by:
		\begin{equation*}
			V(t, y, m) := \sup_{\mathbb{P} \in \mathcal{P}(t,y, m)} J(t, y, m, \mathbb{P}).
		\end{equation*}
We start from the Dynamic Programming Principle (DPP) which holds under fairly general assumptions, see e.g. Djete, Possama\"{\i} \& Tan \cite{djete2020}:
\begin{equation*}
V(t, y, m) 
= 
\sup_{\mathbb{P} \in \mathcal{P}(t,y,m)} 
\mathbb{E}_t \left[ \int_t^{\theta^\mathbb{P}} f^{\alpha_s}(X_s, Y_s, \mathbb{P}_{X_s}^{W^0}) \dd s + V(\theta^\mathbb{P}, Y_{\theta^\P}, \mathbb{P}_{X_{\theta^\mathbb{P}}}^{W^0})       \right],
		\end{equation*}
for any family $\lbrace \theta^\mathbb{P} \rbrace_{\mathbb{P} \in \mathcal{P}(t,m)}$ of $[t, T]$-valued stopping times.
Our objective is to derive the HJB equation as the infinitesimal counterpart of the last dynamic programming equation under smoothness conditions on the value function. It is well known that such strong smoothness conditions can (and should) be avoided by introducing an appropriate notion of viscosity solution. This important point is beyond the scope of the current illustrative section and is left for future work.

		\begin{proposition}
			Suppose that $\partial_t V, \partial_y V, \partial_y^2 V, \delta_m V, \partial_x \delta_m V, \partial_x^2 \delta_m V, \delta_m^2 V, \partial_x \partial_{\hat{x}} \delta_m^2 V, \partial_x \delta_m \partial_y V$ exist and are jointly continuous, with respect to both $\mathcal{W}_2$ and $\mathcal{W_1}$ for the measure valued argument.
Then $V$ satisfies the following Hamilton-Jacobi-Bellman (HJB) equation:
\b* 
0
&=&- \partial_t V - k\!\cdot\!\partial_y V
       -\frac12(\gamma\gamma^\intercal+\gamma^0\gamma^{0^\intercal})\!:\!\partial_{yy} V
\\
&& \hspace{-10mm}
- \!\!\!\sup_{a \in \L^0(A)} \!
  \Big\{ \!\!\int\!\! \Big(f^a\!+\! b^a\!\!\cdot\!\partial_x \delta_m V 
                                   \!+\!\frac12\big(\sigma^a\sigma^{a^\intercal}
                                                       \!\!\!+\!\sigma^{0,a}\!(\sigma^{0,a})^\intercal
                                                \!\big)\!:\!\partial_{xx}\delta_m V
                                   \!\!\!+\!\sigma^{0,a} (\gamma^0)^\intercal
                                                      \!\!:\!\partial_x \delta_m \partial_y V 
                                            \Big)(.,x)m(\dd x)
\\
&&\hspace{14mm}
+ \frac{1}{2} \int \!\!\! \int \sigma^{0,a}(.,x) \sigma^{0,a}(.,\hat x)\!:\!\partial_{x \hat{x}}^2 \delta_m^2 V(., x, \hat{x})  m(\dd x)m(\dd \hat{x})
\Big\}, \vspace{2mm}
\\
&&\hspace{-10mm}
V\big|_{t=T}
= g.
\e*
where we denoted $\varphi^a(.,x):=\varphi\big(t,y,m,x,a(x)\big)$ for all function $\varphi$.
\end{proposition}
		
		\begin{proof}
			We write the proof for $d = d_0 = 1$.
			\begin{itemize}
				\item \textbf{Supersolution property:} For $t \in [0, T]$,  let $\alpha$ be a constant control process, such that $\alpha_t = a \in A$ for all $t \in [0,T]$. By the Dynamic Programming Principle, for any stopping time $\theta$, 
				\begin{equation*}
					V(t, y, m) \geq \mathbb{E}_t \left[ \int_t^\theta f^{\alpha_s}(Y_s, \mu_s) \dd s + V(\theta, Y_\theta, \mu_\theta) \right],
				\end{equation*}
				that is,
				\begin{equation} \label{HJB1}
					\mathbb{E}_t \left[ \int_t^\theta f^{\alpha_s}(Y_s, \mu_s) \dd s + V(\theta, Y_\theta, \mu_\theta) - V(t, y, m)  \right] \leq 0.
				\end{equation}
				By It\=o-Wentzell's formula,
\b*
V(\theta, Y_\theta, \mu_\theta) - V(t, y, m) 
&=& 
\int_t^\theta \mathcal{L}^{\alpha} V(s, Y_s, \mu_s) \dd s + \int_t^\theta \partial_y V(s, Y_s, \mu_s) \gamma_s \dd B_s^{\mathbb{P}}
\\
&&\hspace{-20mm}
+ \int_t^\theta \left(\mathbb{E}^0 \left[  \partial_x \delta_m V(s, Y_s, \mu_s, X_s) \sigma_s^0 \right] + \partial_y V(s, Y_s, \mu_s) \gamma_s^0 \right) \dd W_s^0,
\e*
where 
\b*
\mathcal{L}^{a} V(s, y, m) 
&:=& 
\partial_t V(s, y, m) + \partial_y V(s, y, m)k_s 
					+\dfrac{1}{2} 
					 \partial_y^2 ((\gamma_s^a)^2 + (\gamma_s^{0,a})^2)
					  V(s, y, m) 
\\
&&
\hspace{-25mm}+  \int \partial_x \delta_m V(s, y, m, x) b_s^a m(\dd x) + \dfrac{1}{2} \int \partial_x^2 \delta_m V(s, y, m, x) ((\sigma_s^a)^2 + (\sigma_s^{0,a})^2) m (\dd x) 
\\
&&
\hspace{-25mm}+ \int \partial_x \delta_m \partial_y V(s, y, m, x) \sigma_s^{0,a} \gamma_s^{0,a} m(\dd x) 
\\
&&
\hspace{-25mm}
+ \dfrac{1}{2} \int\!\!\!\! \int \partial_{x \hat{x}}^2 \delta_m^2 V(s, y, m, x, \hat{x}) \sigma_s^{0,a} \hat{\sigma}_s^{0,a} m (\dd x) m (\dd \hat{x}).
\e*
Let us now choose $\theta = \theta_h := \inf \lbrace s > t, \mathcal{W}_2(\mu_s, \mu_t) \geq 1 \text{ or } |Y_s - Y_t| \geq 1\rbrace \wedge (t+h)$, for $h > 0$. Since $X, Y, \sigma^0, \gamma, \gamma^0, \mu, \partial_y V$ and $\partial_x \delta_m V$ are continuous, $\mathbb{E}^0 \left[  \partial_x \delta_m V(s, Y_s, \mu_s, X_s) \sigma_s^0 \right]$, $\partial_y V(s, Y_s, \mu_s) \gamma_s^0$ and $\partial_y V(s, Y_s, \mu_s) \gamma_s$ are bounded over $[t, \theta_h]$ and therefore the conditional expectation of the stochastic integral terms in the last expression is zero.
				Then, dividing \eqref{HJB1} by $h > 0$:
				\begin{equation*}
					\mathbb{E}_t \left[ \dfrac{1}{h} \int_t^{\theta_h} (f^{\alpha_s}(Y_s,  \mu_s) + \mathcal{L}^{\alpha_s} V(s, Y_s, \mu_s)) \dd s \right] \leq 0.
				\end{equation*}
				But a.s., for $h$ small enough $\theta_h = t+h$ and
				\begin{equation*}
					\dfrac{1}{\theta_h - t} \int_t^{\theta_h} (f^{\alpha_s}(Y_s, \mu_s) + \mathcal{L}^{\alpha_s} V(s, Y_s, \mu_s)) \dd s \longrightarrow f^{a}(y, m) + \mathcal{L}^a V(t, y, m), \text{ a.s.}
				\end{equation*}
				as $\alpha$ is constant. By dominated convergence for the expectation, we then have that
				\begin{equation*}
					f^{a}(y, m) + \mathcal{L}^{a} V(t, y, m) \leq 0.
				\end{equation*}
				Since $a \in A$ is arbitrary, we can conclude that
				\begin{equation*}
					\sup_{a \in \L^0(A)} \mathcal{L}^a V (t, y, m) + f^a(y, m) \leq 0.
				\end{equation*}
				
				\item  \textbf{Subsolution property.
Let $\tau^\mathbb{P}_h:=(t+h)\wedge\tau^\mathbb{P}$ with $\tau^\mathbb{P}:=\inf\{s>t:\mathcal{W}_2(\mu_s,m) + | Y_s - y | \ge\delta\}$, for some $\delta>0$, and $\mu_s := \mathbb{P}^0_{X_{s}}$.
By the dynamic programming principle, we have: 
$$
V(t, y, m) \le \sup_{\mathbb{P} \in \mathcal{P}(t,y,m)} \mathbb{E}^\mathbb{P} \Big[ \int_t^{\tau^\mathbb{P}_h} f^{\alpha_s}\big(Y_s, \mu_s\big)ds + V\big(\tau^\mathbb{P}_h,\mu_{\tau^\mathbb{P}_h}\big)\Big],
$$
from which we deduce, by Itô-Wentzell formula:
\begin{align*}
\inf_{\mathbb{P} \in \mathcal{P}(t, y, m)} \int_t^{\tau^\mathbb{P}_h} 
\mathbb{E}^{ \mathbb{P}}\Big[ &(-\! \mathcal{L}^{\alpha_s} \delta_m V  
                               \!-\! f^{\alpha_s})(s,  Y_s, \mu_s,X_s) \Big]ds \le 0,
\end{align*}
For $\delta > 0$, introduce now $B_2(y,\mu,\delta) := \{ (y',\mu') \in \mathbb{R} \times \mathcal{P}_2(\mathbb{R}^d) : |y' - y| + \mathcal{W}_2(\mu, \mu') \le \delta \}$ and $Q_2^{h,\delta}(t,y,\mu) := [t,t+h] \times B_2(\mu,y,\delta)$. Then it follows from the local boundedness of the derivatives of $V$ that: 
\begin{eqnarray*}
\inf_{(s,y',\mu') \in Q_2^{h,\delta}(t,y,\mu)} \mathcal{H}(s,y',\mu')
\le
\frac{C_\delta}{h}\; \sup_{\mathbb{P} \in \mathcal{P}(t,\mu)} \int_t^{t+h} \mathbb{P}\big[(Y_s, \mu_s) \notin B_2(\mu,y, \delta)\big]ds,
\end{eqnarray*}
for some constant $C_\delta> 0$, where
$$
\mathcal{H}(s,y,\mu')
:=
-\int_{\mathbb{R}} \sup_{a \in A} \Big\{ \mathcal{L}^{a} V +f^a 
                                                                                           \Big\}(s,y,\mu',x) \mu'(dx).
$$
As the control process $\alpha$ take values in a the compact subset $A$, it follows from the continuity of the coefficients of the controlled SDE that:
\begin{align*}
\mathbb{P}\big[ (Y_s, m_s) \notin B_2(\mu, \delta)\big] \le& \mathbb{P}\big[ | Y_s - y | \ge \delta \big] + \mathbb{P}\big[ \mathcal{W}_2(m_s, \mu) \ge \delta \big] \\
\le& \frac{\mathbb{E}^\mathbb{P}\big[ |Y_s - y|^2 + | X_s - X_t |^2 \big]}{\delta^2} \le C' \;\frac{s-t}{\delta^2},
\end{align*}
for some constant $C'>0$. Together with the continuity of $\mathcal{H}$, due to the compactness of $A$, this implies that:
\begin{eqnarray*}
\inf_{(y',\mu') \in B_2(y, \mu, \delta)} \mathcal{H}(t,y',\mu')
\;=\;
\lim_{h\searrow 0}
\inf_{(s,y',\mu') \in Q_2^{h,\delta}(t,y,\mu)}  \mathcal{H}(s,y', \mu')
&\le& 
\lim_{h\searrow 0}\frac{C'C_\delta h}{2\delta^2}
\;=\;
0.
\end{eqnarray*}
To conclude, we use the fact that, since all derivatives of $V$ involved in $\mathcal{H}$ are in continuous both on $\mathcal{W}_1$ and $\mathcal{W}_2$, it follows that $\mathcal{H}$ is uniformly continuous on $B_2(\mu, 1)$ for the $\mathcal{W}_1$ distance. Since $\mathcal{W}_1 \le \mathcal{W}_2$, we obtain by sending $\delta \searrow 0$ that $\mathcal{H}(t,y,\mu)\le 0$, which is the required subsolution property. }
\qed

		\end{itemize}		
		\end{proof}	
		
	
\section{Appendix: Proof of Lemma \ref{lemma}}
\label{sec:App}

For simplicity, we only report the proof for $d=1$, as the extension to arbitrary dimension does not raise any difficulty. Let us first notice that
		\begin{equation*}
			\sum_{i=1}^{p_n} H^n_{t_{i-1}^n} (\Dpin \langle M \rangle_{t_i^n}) \longrightarrow \int_0^T H_s \dd \langle M \rangle_s \text{ in } \mathbb{L}^1.
		\end{equation*}
		Now, with transparent notations, it is obvious that
		\begin{align*}
			\Big\| \sum_{i=1}^{p_n}  H_{t_{i-1}^n} \left((\Delta^{\pi_n} X_{t_i^n})^2 -\Dpin \langle M \rangle_{t_i^n}\right) \Big\|_1  &\leq \Big\| \sum_{i=1}^{p_n}  H_{t_{i-1}^n} (\Delta^{\pi_n} A_{t_i^n})^2 \Big\|_1  \\
			&+ \Big\| \sum_{i=1}^{p_n}  H_{t_{i-1}^n} \left((\Delta^{\pi_n} M_{t_i^n})^2 -\Dpin \langle M \rangle_{t_i^n}\right) \Big\|_1 \\
			&+ 2 \Big\| \sum_{i=1}^{p_n}  H_{t_{i-1}^n} \Delta^{\pi_n} A_{t_i^n} \Delta^{\pi_n} M_{t_i^n} \Big\|_1.
		\end{align*}
		For the first term on the right-hand side: writing $||H||_\infty$ for a uniform, deterministic bound on $|H^n|, n \geq 1$, 
		\begin{equation*}
			\Big| \sum_{i=1}^{p_n}  H^n_{t_{i-1}^n} (\Dpin A_{t_i^n} )^2 \Big| \leq ||H||_\infty \sum_{i=1}^{p_n} (\Dpin A_{t_i^n} )^2  =  ||H||_\infty {\rm QV}_{\pi^n} (A)
		\end{equation*}
		where ${\rm QV}_{\pi^n} (A)$ is the quadratic variation of the finite-variation process $A$ along the partition ${\pi}^n$.
		As ${\rm QV}_{\pi^n}(A) \longrightarrow 0$ a.s. and ${\rm QV}_{\pi^n} (A) \leq |A|_{\rm TV}^2$ which is in $\mathbb{L}^1$, it follows from the dominated convergence theorem that
		\begin{equation*}
			\Big\| \sum_{i=1}^{p_n}  H^n_{t_{i-1}^n} (\Dpin A_{t_i^n} )^2 \Big\|_1 \longrightarrow 0.
		\end{equation*}
		For the middle term, we introduce the martingale defined by $R_t^{t_{i-1}^n} :=  (\Dpin M_t)^2 -\Dpin \langle M \rangle_{t}$ for $t \geq t_{i-1}^n$ and we now show that
		\begin{equation*}
			\Big\| \sum_{i=1}^{p_n - 1} H^n_{t_{i-1}^n} R_{t_i^n}^{t_{i-1}^n}  \Big\|_2^2 = \Big\| \sum_{i=1}^{p_n}  H^n_{t_{i-1}^n} \left((\Dpin M_{t_i^n})^2 -\Dpin \langle M \rangle_{t_i^n}\right) \Big\|_2^2 \longrightarrow 0.
		\end{equation*}
		To see this, we directly compute that
		\begin{align*}
			\Big\| \sum_{i=1}^{p_n - 1} H^n_{t_{i-1}^n} R_{t_i^n}^{t_{i-1}^n}  \Big\|_2^2 &= \mathbb{E} \left[ \sum_{i=1}^{p_n} {H^n_{t_{i-1}^n}}^2 (R_{t_i^n}^{t_{i-1}^n})^2 \right] + 2 \mathbb{E} \left[ \sum_{0 \leq i < j \leq n-1} H^n_{t_{i-1}^n} H^n_{t_j^n} R^{t_{i-1}^n}_{t_i^n} R^{t_j^n}_{t_{j+1}^n}\right] \\
			&\hspace{-12mm}= \mathbb{E} \left[ \sum_{i=1}^{p_n} {H^n_{t_{i-1}^n}}^2 (R_{t_i^n}^{t_{i-1}^n})^2 \right] + 2 \sum_{0 \leq i < j \leq n-1} \mathbb{E} \left[ H^n_{t_{i-1}^n} H^n_{{t_j^n}^n} R^{t_{i-1}^n}_{t_i^n} \mathbb{E}\left[ R^{t_j^n}_{t_{j+1}^n} | \mathcal{F}_{t_j^n} \right] \right].
		\end{align*}
		As $\mathbb{E}\left[ R^{t_j^n}_{t_{j+1}^n} | \mathcal{F}_{t_j^n} \right] = 0$, this implies that
		\begin{equation*}
			\Big\| \sum_{i=1}^{p_n}  H^n_{t_{i-1}^n} ((\Dpin M_{t_i^n} )^2 -\Dpin \langle M \rangle_{t_i^n}) \Big\|_2^2 \leq ||H||_\infty^2 \sum_{i=1}^{p_n} \mathbb{E} \left[ (R_{t_i^n}^{t_{i-1}^n})^2 \right].
		\end{equation*}
		We next estimate that
		\begin{align*}
			\mathbb{E} \left[ (R_{t_i^n}^{t_{i-1}^n})^2 \right]	&=\mathbb{E} \left[ (\Dpin M_{t_i^n} )^4 - 2 (\Dpin M_{t_i^n})^2 \Dpin \langle M \rangle_{t_i^n} + (\Dpin \langle M \rangle_{t_i^n})^2 \right] \\
			&\leq \mathbb{E} \left[ (\Dpin M_{t_i^n} )^4 \right] + \mathbb{E} \left[ (\Dpin \langle M \rangle_{t_i^n})^2 \right] \\
			&\leq (1 + C_4) \mathbb{E} \left[(\Dpin \langle M \rangle_{t_i^n})^2 \right]
		\end{align*}
		for some constant $C_4$ induced by the BDG inequality for the order $p=4$. Therefore,
		\begin{equation*}
			\Big\| \sum_{i=1}^{p_n}  H^n_{t_{i-1}^n} \left((\Dpin M_{t_i^n})^2  - \Dpin \langle M \rangle_{t_i^n}\right) \Big\|_2^2  \leq  (1 + C_4) ||H||_\infty^2 \mathbb{E} \left[ {\rm QV}_\pi^n(\langle M \rangle) \right]
		\end{equation*}
		where ${\rm QV}_\pi^n(\langle M \rangle) = \sum_{i=1}^{p_n} (\langle M \rangle_{t_i^n} - \langle M \rangle_{t_{i-1}^n})^2$. Since $\langle M \rangle$ is a finite-variation process, ${\rm QV}_\pi^n (\langle M \rangle) \longrightarrow 0$ almost surely as $n \rightarrow \infty$, and since ${\rm QV}_\pi^n (\langle M \rangle) \leq \langle M \rangle_T^2 \in \mathbb{L}^1$  by Condition {\rm(IW3)}, we conclude by dominated convergence.
		
		Finally, for the last term, note that the previous calculations, for $H = 1$, show that $\sum_i (\Dpin M_{t_i^n} )^2 \longrightarrow \langle M \rangle_T$ in $\mathbb{L}^2$. Then, by applying the Cauchy-Schwarz inequality twice:
		\begin{align*}
			\mathbb{E} \left[ \Big| \sum_{i=1}^{p_n} H^n_{t_{i-1}^n} (\Dpin A_{t_i^n} )(\Dpin M_{t_i^n} ) \Big| \right]	&\leq ||H||_{\infty} \mathbb{E} \left[\sqrt{\sum_{i=1}^{p_n}  (\Dpin A_{t_i^n} )^2} \sqrt{\sum_{i=1}^{p_n} (\Dpin M_{t_i^n} )^2}  \right] \\
			&\leq ||H||_{\infty} \Big\| \sum_{i=1}^{p_n} (\Dpin A_{t_{i}^n})^2 \Big\|_{2}  \Big\| \sum_{i=1}^{p_n} (\Dpin M_{t_{i}^n})^2 \Big\|_2 \longrightarrow 0,
		\end{align*}
		since $\mathbb{E} \left[ \sum_{i=1}^{p_n} (\Dpin A_{t_i^n} )^2 \right] \rightarrow 0$ by dominated convergence (see first term calculations).

%
%

\end{document}